\documentclass{article}
\usepackage{amsmath,amssymb,amsthm}
\usepackage{xcolor}
\usepackage{hyperref}
\hypersetup{colorlinks,breaklinks,
             linkcolor=blue,urlcolor=blue,
             anchorcolor=blue,citecolor=blue}
\usepackage[nameinlink, capitalise, noabbrev]{cleveref}
\usepackage{comment}
\usepackage{bm}
\usepackage{mathtools}
\usepackage{enumitem}
\usepackage{graphicx}
\usepackage{caption}
\usepackage{subcaption}
\usepackage{overpic}
\usepackage{graphicx}

\newtheorem{theorem}{Theorem}[section]
\newtheorem{definition}[theorem]{Definition}
\newtheorem{lemma}[theorem]{Lemma}
\newtheorem{remark}[theorem]{Remark}
\newtheorem{proposition}[theorem]{Proposition}
\newtheorem{corollary}[theorem]{Corollary}
\newcommand{\calF}{\mathcal{F}}
\newcommand{\Tr}{\mathrm{Tr}}
\usepackage{mathrsfs}  
\newcommand{\TP}{|\!\!\>|\!\!\>|}

\newcommand{\an}[1]{{\color{orange}{AN: #1}}}

\title{Stochastic Modified Equations for Stochastic Gradient Descent in Infinite-Dimensional Hilbert Spaces}

\author{Sandra Cerrai, Qin Li, Anjali Nair, Jaeyoung Yoon}
\date{\today}

\begin{document}

\maketitle

\begin{abstract}
Inverse problems in scientific computing often require optimization over infinite-dimensional Hilbert spaces. A commonly used solver in such settings is stochastic gradient descent (SGD), where gradients are approximated using randomly sampled sub-objective functions. In this work we study the continuous-time limit of SGD in the small step-size regime. We show that the discrete dynamics can be approximated by a stochastic differential equation (SDE) driven by cylindrical Brownian motion. The analysis extends diffusion-approximation results previously established in Euclidean spaces~\cite{li2019stochastic} to the infinite-dimensional setting.

Two analytical difficulties arise in this extension. First, the cylindrical nature of the noise requires establishing well-posedness of the resulting stochastic evolution equation through appropriate structural conditions on the covariance operator. Second, since the randomness in SGD originates from discrete sampling while the limiting equation is driven by Gaussian noise, the comparison between the two dynamics must be carried out in a weak sense. We therefore introduce a suitable class of smooth functionals on the Hilbert space and prove that the discrepancy between SGD and the limiting SDE, when evaluated through these functionals, is of second order in the step size. Numerical experiments confirm the predicted convergence behavior.
\end{abstract}

\section{Introduction}


Optimization in infinite-dimensional spaces presents challenges beyond those encountered in finite dimensions. Many numerical solvers possess a computational complexity that depends explicitly on the ambient dimension $d$, rendering their direct extension to the case $d = \infty$ ill-defined.

Yet, many optimization problems are naturally posed over Hilbert spaces, so the dimension has to be infinity. A big source of such applications are inverse problems \cite{tarantola2005,engl1996,vogel2002computational,kaipio2005statistical,stuart2010inverse}. Inverse problems are ubiquitous in scientific computing, where measurements are collected to infer unknown parameters by minimizing the mismatch between simulated data and observations. Examples include classical X-ray tomography, optical tomography, and inverse scattering, where the unknowns are functions rather than finite vectors \cite{natterer2001mathematics,arridge1999optical,colton1998inverse,arridge2019solving}. In all these settings, the parameter belongs naturally to a function space, and the associated optimization problem is posed over an infinite-dimensional Hilbert space $H$. Extending optimization solvers from Euclidean space to $H$ requires a rigorous examination of operator compactness, regularity, and well-posedness.

In this work, we investigate one such extension: Stochastic Gradient Descent (SGD) in a Hilbert space. In finite dimensions, SGD is widely used when the objective function is composed of many (possibly infinitely many) sub-objective functions \cite{robbins1951stochastic,Bottou2018,lecun2002efficient}. At each iteration, the algorithm updates the iterate using a small step in the direction of the gradient of a randomly selected sub-objective, providing an unbiased estimator of the full gradient. Its low per-iteration cost, ease of implementation, and favorable stability properties have made it a very popular tool in large-scale optimization \cite{sutskever2013importance,hardt2016train,zhang2016understanding,keskar2016large,mei2018mean,sirignano2022mean}.

A powerful perspective for understanding SGD's performance is the continuous-in-time approximation: under suitable scaling of the step size $\eta$, the discrete SGD iteration can be approximated by a stochastic differential equation (SDE), often termed a Stochastic Modified Equation (SME) \cite{li2017,li2019stochastic}. This viewpoint provides insight into understanding SGD performance, including the fluctuation behavior, convergence in iterations, and long-time statistical properties, and has led to refined algorithmic modifications in the finite-dimensional setting~\cite{mandt2017stochastic,li2017,li2019stochastic,li2021validity}.



The goal of this paper is to extend this framework to infinite-dimensional Hilbert spaces, where the limiting object is an infinite-dimensional stochastic evolution equation (sometimes referred to as an SPDE) driven by cylindrical noise. One immediate difficulty lies in the lost of well-posedness of the limiting equation. In the infinite-dimensional setting, the covariance structure of the stochastic gradient must be represented by an operator on the Hilbert space, and the noise appears as a cylindrical Brownian motion. Controlling the variance of the resulting SPDE therefore requires careful control of the covariance operator to ensure the limiting SPDE makes sense. Furthermore, the two dynamics are equipped with different random sources: the randomness in SGD arises from discrete sampling while the limiting SPDE is driven by Gaussian noise, the approximation cannot be understood pathwise. Instead, the comparison must be carried out in a weak sense, through evaluation against suitable smooth functionals. This necessitates a precise identification of the appropriate class of test functionals on the Hilbert space.

We nevertheless provide an affirmative answer in a weak sense. More precisely, we define
\begin{itemize}
    \item $\phi_n$: the $n$-th step SGD iterates,
    \item $\varphi(t)$: the solution to the associated SPDE at time $t$, and 
    \item $\widetilde\varphi_n$: the Euler-Maruyama (EM) discretization evaluated at $t_n = n\eta$ where $\eta$ is a small time-stepping.
\end{itemize}
We measure the difference between these quantities against smooth test functionals $g: H \to \mathbb{R}$.




We find $\mathbb{E}(g(\phi_n)) -\mathbb{E}g(\widetilde\varphi_n)$ is of second order in $\eta$. This is a somewhat surprising conclusion. Indeed it is well-established that the EM discretization is only a first-order weak approximation of the continuous SPDE, namely $\widetilde{\varphi}_n\approx \varphi(t_n)$ only up to $O(\eta)$. Yet, the actual SGD algorithm $\phi_n$ aligns with the EM discretization to a higher order ($O(\eta^2)$)! This higher-order agreement is noteworthy because it suggests that ``vanilla" SGD possesses a high-fidelity affinity to its numerical proxy without the need for complex high-order schemes.




Two ingredients are crucial in this analysis. First, the covariance operator associated with the stochastic gradient plays a central role. Under natural structural assumptions on the sub-objective functions, this operator is compact, which allows the cylindrical noise to be interpreted within the classical theory of infinite-dimensional stochastic evolution equations. Second, the weak-error analysis requires the propagation of smoothness along both the discrete and continuous dynamics. Even if the test functional $g$ is smooth initially, its composition with the flow can deteriorate regularity; therefore, careful control of higher-order derivatives is required to close the estimates and ensure the $O(\eta^2)$ bound holds.

We summarize our main contributions of this paper. We rigorously derive the SPDE limit for SGD in $H$ and establish the well-posedness of the resulting evolution equation under cylindrical white noise. We prove that $\phi_n$ and $\widetilde{\varphi}_n$ coincide up to $O(\eta^2)$ in the weak sense, provided certain smoothness of the test functional $g$, justifying the SME as a high-fidelity proxy for algorithmic behavior. Finally, we provide experiments on inverse problems confirming the $O(\eta^2)$ error slope in infinite-dimensional settings.

The remainder of the paper is organized as follows. In \Cref{sec:preliminary} we review the functional analytic and probabilistic preliminaries. In \Cref{Sec_SME} we derive the SPDE limit and establish its well-posedness. \Cref{Sec_anal} is devoted to the weak error analysis comparing SGD and the SPDE. \Cref{sec:numerics} presents numerical evidence illustrating the theoretical results.

\section{Preliminaries}\label{sec:preliminary}

This section collects the technical ingredients required for the analysis. Two main components are needed: the probabilistic framework for stochastic evolution equations in Hilbert spaces, and the theory of high order differentiation for functionals defined on infinite-dimensional spaces. These tools are essential for formulating the limiting SPDE and for carrying out the weak error analysis between the discrete SME dynamics and SGD solver.

For completeness and to fix notation, we briefly review the relevant concepts and results that will be used throughout the paper.

\subsection{Stochastic Framework in a Hilbert Space}
We summarize the notions needed for establishing probabilistic framework in a Hilbert space. Definitions are mostly follow~\cite{Prato_Zabczyk_2014,PR2007,LPS2014,gawarecki2010}.

Over a probability space  $(\Omega,\mathcal F,\mathbb P)$, we first define a Hilbert space
\begin{equation}\label{eqn:L_omega_space}
L_\Omega^2:=\{u:\Omega\to\mathbb R~\big|~\mathbb E[u(\omega)^2]<\infty\}\,,
\end{equation}
equipped with the inner product $\langle u_1,u_2\rangle_{L_\Omega^2}=\mathbb E[u_1(\omega)u_2(\omega)]$.

This definition can be extended to higher-dimensional space. Denote $H$ a real separable Hilbert space, endowed with the inner product $\langle \cdot,\cdot\rangle_H$ and the associated norm $\|\cdot\|_H$,  and let $\mathcal B(H)$ be its Borel $\sigma$-algebra. When there is no risk for ambiguity, we drop the subscript $H$. Then over the same probability space  $(\Omega,\mathcal F,\mathbb P)$, we collect all $H$-valued $\mathcal F$-measurable random variables $X:\Omega\to H$, and for every $1\le p<\infty$, define
\begin{align*}
    \|X\|_{L^p(\Omega,H)}:=\left(\mathbb E\,\|X\|^p\right)^{1/p}=\left(\int_{\Omega}\|X(\omega)\|^pd\mathbb P(\omega)\right)^{1/p}\,,
\end{align*}
and define a Banach space:
\begin{align*}
    L^p(\Omega,H)=\{X:\Omega\to H\,,\quad \mathbb E\,\|X\|^p<\infty\}\,.
\end{align*}

We can also define second-order structures of an $H$-valued random variable $X$.
\begin{definition}
    A linear operator $Q:H\to H$ is called the {\em covariance operator} of an $H$-valued random variable $X$ if
    \begin{align*}
        \langle Q\phi,\psi\rangle = \mbox{{\em Cov}}\,(\langle X,\phi\rangle,\langle X,\psi\rangle),\quad~\phi,\psi\in H.
    \end{align*}
\end{definition}
We note that $\langle X,\phi\rangle$ is a scalar, so $\mbox{{\em Cov}}\,(\langle X,\phi\rangle,\langle X,\psi\rangle)$ is understood in the usual sense.
\begin{definition}
    An $H$-valued random variable $X$ is called {\em Gaussian} if $\langle X,\phi\rangle$ is a real-valued Gaussian random variable, for all $\phi\in H$.
\end{definition}
In what follows, for every $H,K$ separable Hilbert spaces, we  denote by $\mathcal{L}(H,K)$
the space of bounded linear operators from $H$ to $K$.
If $H=K$, we  use the notation $\mathcal{L}(H)$. We  denote by $\mathcal{L}_2(H,K)$ the subspace of all operators $Q \in\,\mathcal{L}(H,K)$ such that
\[\|Q\|_{2}=\left(\,\sum_{i=1}^\infty \|Q e_i\|_K^2\right)^{1/2}<\infty,\]
where $\{e_i\}_{i\in\,\mathbb{N}}$ is any orthonormal basis of $H$. If this series converges, the value of $\|Q\|_2$ is independent of the chosen basis. Operators in $\mathcal{L}_2(H,K)$ are called Hilbert-Schmidt operators on $H$ with values in $K$. 
Next, we denote by $\mathcal{L}^+(H)\subset\mathcal{L}(H)$ consisting of all non-negative and self-adjoint operators.  
For every $Q \in\,\mathcal{L}^+(H)$, the trace of $Q$ is defined by
\begin{align*}
    \mbox{Tr}\,Q=\sum_{i=1}^\infty\,\langle Qe_i,e_i\rangle,
\end{align*}
where $\{e_i\}_{i\in\,\mathbb{N}}$ is any orthonormal basis of $H$. Also in this case, if this series converges, the value of $\mbox{Tr}\,Q$ is independent of the chosen basis, and we say that $Q$ belongs to $\mathcal{L}^+_1(H)$, the space of {\em trace-class} operators on $H$. 
Finally, we would like to remark that if $Q,R \in\,\mathcal{L}_2(H,K)$, then $R^\star Q \in\,\mathcal{L}^+_1(H)$ and
\[\langle R,Q\rangle _{\mathcal{L}_2(H,K)}=\mbox{Tr} \,(R^\star Q)=\mbox{Tr} \,(Q^\star R),\]
defines an inner product in $\mathcal{L}_2(H,K)$, whose associated norm is the norm $\|\cdot\|_2$.

The following result summarizes standard properties of covariance operators associated with square-integrable random variables.
\begin{lemma}\label{lem_finite_trace}
    Let $X$ be a random variable in $L^2(\Omega,H)$, with mean $\mu \in\,H$ and covariance operator $Q \in\,\mathcal{L}(H)$. Then, the operator $Q$ belongs to $\mathcal{L}^+_1(H)$, with
    \begin{align*}
        \mbox{{\em Tr}}\,Q=\mathbb E\,\|X-\mu\|^2.
    \end{align*}
\end{lemma}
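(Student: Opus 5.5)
Set $Y := X-\mu$. Then $Y \in L^2(\Omega,H)$ with $\mathbb E\,Y=0$, and by the definition of covariance we have, for all $\phi,\psi\in H$,
\[
\langle Q\phi,\psi\rangle=\mathbb E\big[\langle Y,\phi\rangle\langle Y,\psi\rangle\big].
\]
The right-hand side is well defined because $|\langle Y,\phi\rangle\langle Y,\psi\rangle|\le \|Y\|^2\|\phi\|\|\psi\|$, which is integrable. The proof has two parts. First we check $Q\in\mathcal L^+(H)$. Then we compute the trace along an arbitrary orthonormal basis and show it equals $\mathbb E\|Y\|^2<\infty$.

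\textbf{Step 1: $Q$ is non-negative and self-adjoint.} Self-adjointness follows from the symmetry of the expression above in $\phi$ and $\psi$, since $\langle Q\phi,\psi\rangle=\langle \phi,Q\psi\rangle$. Non-negativity follows from $\langle Q\phi,\phi\rangle=\mathbb E\langle Y,\phi\rangle^2\ge 0$. Boundedness holds by hypothesis ($Q\in\mathcal L(H)$). Independently of the hypothesis, it also follows from $|\langle Q\phi,\psi\rangle|\le \mathbb E\|Y\|^2\,\|\phi\|\|\psi\|$. Hence $Q\in\mathcal L^+(H)$, and $\mathrm{Tr}\,Q$ is meaningful as a series of non-negative terms with values in $[0,\infty]$.

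\textbf{Step 2: the trace identity.} Fix an orthonormal basis $\{e_i\}$ of $H$. Then
\[
\mathrm{Tr}\,Q=\sum_{i=1}^\infty \langle Qe_i,e_i\rangle=\sum_{i=1}^\infty \mathbb E\langle Y,e_i\rangle^2 .
\]
Since every summand $\langle Y,e_i\rangle^2$ is non-negative, the monotone convergence theorem (equivalently, Tonelli) lets us interchange sum and expectation:
\[
\sum_{i=1}^\infty \mathbb E\langle Y,e_i\rangle^2=\mathbb E\sum_{i=1}^\infty\langle Y,e_i\rangle^2 .
\]
Parseval's identity applied pointwise in $\omega$ gives $\sum_i\langle Y(\omega),e_i\rangle^2=\|Y(\omega)\|^2$. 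Therefore
\[
\mathrm{Tr}\,Q=\mathbb E\|X-\mu\|^2\le 2\,\mathbb E\|X\|^2+2\|\mu\|^2<\infty .
\]
So the series converges and $Q\in\mathcal L_1^+(H)$. The right-hand side does not depend on the basis, which is consistent with the basis-independence of the trace recalled above.

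\textbf{Main obstacle.} The only delicate point is justifying the interchange of the infinite sum with the expectation. Nonnegativity of the summands makes monotone convergence applicable without any a priori summability assumption. A secondary point is measurability: each $\langle Y,e_i\rangle$ is a real random variable, since $Y$ is $\mathcal F$-measurable and $\langle\cdot,e_i\rangle$ is continuous. The existence of the Bochner mean $\mu$ is guaranteed by $X\in L^2\subset L^1(\Omega,H)$, and $\mathbb E\langle X,\phi\rangle=\langle\mu,\phi\rangle$ is exactly what makes the centred formula for $Q$ hold.
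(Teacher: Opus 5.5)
Your proof is correct and follows the same route as the paper. The lemma is quoted there as standard without proof, and the one place the paper does this computation (for $Q(\phi)$ right after (A3) in Section~\ref{subsec_sgd}) uses exactly your argument: expand $\mathrm{Tr}\,Q$ along an orthonormal basis as $\sum_i \mathbb E\langle X-\mu,e_i\rangle^2$, then apply Parseval. Your monotone-convergence justification for swapping sum and expectation, and your check that $\mathbb E\langle X,\phi\rangle=\langle\mu,\phi\rangle$, are details the paper leaves implicit.
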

We next introduce the  {\em cylindrical Wiener process}, which will provide the driving noise for the stochastic differential equations considered in this paper.
\begin{definition}\label{def2.4}
Let  $\{e_i\}_{i \in\,\mathbb{N}}$ be an orthonormal basis of the separable Hilbert space $H$ and let $\{\beta_i(t)\}_{i \in\,\mathbb{N}}$ be a sequence of mutually independent standard Brownian motions, all defined on the stochastic basis $(\Omega, \mathcal{F}, \{\mathcal{F}_t\}_{t\geq 0}, \mathbb{P})$. We define
    \begin{align}\label{series}
        W_t=\sum_{i=1}^\infty\beta_i(t)e_i,\ \ \ \ t\geq 0.
    \end{align}
\end{definition}
It can be easily checked that series above converges in $L^2(\Omega;K)$ for any $K$, an Hilbert space such that the embedding $H\to K$ belongs to the space of Hilbert-Schmidt operators $\mathcal{L}_2(H,K)$. Moreover,
\[\|W_t\|^2_{L^2(\Omega,K)}=t\,\sum_{i=1}^\infty \|e_i\|_K^2.\]



\subsection{Differentiation and Expansion in Hilbert Spaces}

Since we are evaluating functionals over function spaces, the notions of Fr\'echet differentiation and Taylor expansion in Hilbert spaces are essential. We review these concepts and unify notations, following~\cite{cartan2012differential, Prato_Zabczyk_2014}. Throughout the paper, when the context is clear, we do not distinguish functionals and functions.

Let $E$ and $F$ be Banach spaces, and let $E_o\subset E$ be an open set. 
\begin{definition}
    A function $f:E_o\to F$ is said to be Fr\'echet differentiable at $a\in E_o$ if there exists a bounded linear map $Df(a)\in\mathcal L(E,F)$ such that
    \begin{align}\label{deri_Banach}
        \lim_{\substack{h\to0\\a+h\in E_o}}\frac{\|f(a+h)-f(a)-Df(a)\|_F}{\|h\|_E}=0.
    \end{align}
\end{definition}
In case $E=H$, for some Hilbert space $H$, and $F=\mathbb{R}$, $Df(a)$ is $H$-valued, and we denote it  $\nabla f(a) \in\,H$.

Next, we examine higher order differentiability.
\begin{definition}
A function $f:E_o\to F$ is twice differentiable at $a\in U$ if $Df:E_o\to \mathcal{L}(E,F)$ is differentiable at $a$. The second derivative $D^2f(a)$ is the derivative of $Df$ at $a$ and belongs to $\mathcal{L}(E;\mathcal{L}(E,F))=\mathcal{L}^2(E,F)$. Higher-order derivatives are defined inductively, with the $n$-th derivative $D^n f(a)$ belonging to $\mathcal L^n(E; F)$.
\end{definition}
Note that according to this definition, for every $n \in\,\mathbb{N}$, $\mathcal L^n(E;F)$ is the space of continuous $n$-linear maps from $E^n=E\times \cdots\times E$ into $F$. Akin to directional derivative, we also have high-order Gateaux derivatives as well. For $h_1,\ldots,h_n\in E$, we write
\begin{align*}
    D^nf(a)\,(h_1,\ldots,h_n)\in F\,,
\end{align*}
as the evaluation of the multilinear map $D^nf(a)$ at $(h_1,\ldots,h_n) \in\,E^n$. When $h_1=h_2=\cdots=h_n$, we condense the expression to be $D^nf(a)\,(h)$.

These notations are useful to express functional Taylor expansion below. This formula will be shown to be of core use in the weak error analysis in \Cref{Sec_anal}.

\begin{theorem}[Taylor Formula with the Integral Remainder]\label{Taylor_exp}
	Let $f:E_o\to F$ be a function with sufficient smoothness, and assume that the line segment $[a,a+h]$ is contained in $E_o$. Then,
\begin{align*}
	   f(a+h)&=f(a)+Df(a)\, h+\frac{1}{2}D^2f(a)\,(h)+\cdots+\frac{1}{n!}D^nf(a)\, (h)\\
        &\hspace{.4cm}+\int_0^1\frac{(1-t)^n}{n!}D^{n+1}f(a+th)\,(h)dt.
	\end{align*}
\end{theorem}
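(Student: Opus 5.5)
The plan is to reduce the statement to the one-dimensional Taylor formula with integral remainder by restricting $f$ to the segment $[a,a+h]$. Since $E_o$ is open and contains the compact segment, there is $\delta>0$ with $a+th\in E_o$ for all $t\in(-\delta,1+\delta)$. On this interval define the $F$-valued function $\gamma(t)=f(a+th)$. I take ``sufficient smoothness'' to mean that $f$ is $n+1$ times Fr\'echet differentiable on $E_o$, with $D^{n+1}f$ continuous along the segment; this guarantees that the remainder integral makes sense as a Riemann (or Bochner) integral in $F$.

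\textbf{Derivatives of $\gamma$.} I claim that $\gamma$ is $n+1$ times differentiable and
\begin{align*}
\gamma^{(k)}(t)=D^kf(a+th)\,(h),\qquad 0\le k\le n+1 .
\end{align*}
This follows by induction on $k$ from the chain rule for Fr\'echet derivatives. The map $t\mapsto a+th$ is affine with derivative $h$. Suppose $\gamma^{(k)}(t)=D^kf(a+th)(h,\dots,h)$. Evaluation at the fixed tuple $(h,\dots,h)$ is a bounded linear map $\mathcal L^k(E;F)\to F$. Composing it with $t\mapsto D^kf(a+th)$ gives
\begin{align*}
\gamma^{(k+1)}(t)=D^{k+1}f(a+th)(h)(h,\dots,h),
\end{align*}
which is $D^{k+1}f(a+th)\,(h)$ under the identification $\mathcal L(E;\mathcal L^k(E;F))\cong\mathcal L^{k+1}(E;F)$ fixed in the definition of higher derivatives. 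In particular, $\gamma^{(n+1)}$ is continuous on $[0,1]$.

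\textbf{Scalar Taylor formula in $F$.} It remains to prove the Taylor formula for a $C^{n+1}$ map $\gamma:[0,1]\to F$:
\begin{align*}
\gamma(1)=\sum_{k=0}^n\frac{\gamma^{(k)}(0)}{k!}+\int_0^1\frac{(1-t)^n}{n!}\gamma^{(n+1)}(t)\,dt .
\end{align*}
I proceed by induction on $n$.
\begin{itemize}
\item The case $n=0$ is the fundamental theorem of calculus for Banach-valued $C^1$ functions, $\gamma(1)-\gamma(0)=\int_0^1\gamma'(t)\,dt$.
\item For the induction step, integrate by parts:
\begin{align*}
\int_0^1\frac{(1-t)^{n-1}}{(n-1)!}\gamma^{(n)}(t)\,dt=\frac{\gamma^{(n)}(0)}{n!}+\int_0^1\frac{(1-t)^n}{n!}\gamma^{(n+1)}(t)\,dt .
\end{align*}
This identity follows by differentiating $t\mapsto -\frac{(1-t)^n}{n!}\gamma^{(n)}(t)$ via the product rule and applying the $n=0$ case.
\end{itemize}
Substituting the formula for $\gamma^{(k)}$ at $t=0$ and at general $t$ gives the stated expansion.

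\textbf{Main obstacle.} The delicate point is the fundamental theorem of calculus in a general Banach space $F$, where the mean value theorem is not available as an equality. I would handle it by duality. For any $\ell\in F^*$, the real function $\ell\circ\gamma$ satisfies the scalar identity. Since bounded functionals commute with the integral of a continuous function, the difference of the two sides is annihilated by every $\ell\in F^*$, and it therefore vanishes by Hahn--Banach.
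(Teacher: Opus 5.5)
Your proof is correct. The paper gives no proof of this theorem; it recalls it as a standard fact, citing Cartan and Da Prato--Zabczyk, and your argument is the standard textbook proof: restrict to $\gamma(t)=f(a+th)$, identify $\gamma^{(k)}(t)=D^kf(a+th)\,(h)$ by the chain rule, then prove the one-variable Banach-valued Taylor formula by integration by parts.

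Two of your choices are worth noting. Reading ``sufficient smoothness'' as $n+1$ Fr\'echet derivatives with $D^{n+1}f$ continuous along the segment is exactly the hypothesis the paper leaves unstated and that is needed for the remainder integral to make sense. Your Hahn--Banach proof of the vector-valued fundamental theorem of calculus is a valid alternative to the other usual route through the mean value inequality (a map with vanishing derivative on an interval is constant).
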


Lastly, we recall a standard well-posedness result for stochastic evolution equations driven by cylindrical Wiener noise introduced in \eqref{series}. Let $U$ and $H$ be separable Hilbert spaces, let $W_t$ be a cylindrical Wiener process on $U$, and consider
\begin{align}\label{wp4}
    dX_t=b(X_t)dt+\sigma(X_t)dW_t,\quad X_0=x\in H.
\end{align}
\begin{theorem}\label{thm_wellposed}
Let $H$ be a separable Hilbert space and consider equation \eqref{wp4}. Assume that 
$b:H\to H$ and $\sigma:H\to\mathcal L_2(U,H)$ are locally Lipschitz continuous, 
and there exists $c>0$ such that
\begin{equation} \label{wp1}
\langle b(x),x\rangle + \|\sigma(x)\|_{\mathcal L_2(H)}^2
\le c\,(1+\|x\|^2),
\qquad\ x\in H.
\end{equation}
Then for every $x\in H$ there exists a unique global adapted continuous solution $X_t$ with $X_0=x$ such that
\begin{equation}\label{wp5}
\mathbb E \sup_{t\in[0,T]} \|X_t\|^{2p}
\le c_p(T)(1+\|x\|^{2p}),\quad\forall~T>0,~~p\ge1.
\end{equation}
\end{theorem}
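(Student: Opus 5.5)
The plan is the classical localization (truncation) argument combined with Itô's formula and the coercivity condition \eqref{wp1}.

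\textbf{Step 1 (globally Lipschitz coefficients).} For each $R>0$, let $\pi_R:H\to H$ be the radial retraction onto the closed ball $\{\|x\|\le R\}$. It is $1$-Lipschitz. Set $b_R=b\circ\pi_R$ and $\sigma_R=\sigma\circ\pi_R$; these are globally Lipschitz and bounded. For such coefficients we run a Picard iteration (equivalently, a contraction argument) in the Banach space of adapted continuous processes with norm $\left(\mathbb E\sup_{t\le T}\|X_t\|^2\right)^{1/2}$.
\begin{itemize}[label={}]
\item The drift is estimated by Cauchy--Schwarz in time.
\item The stochastic integral $\int_0^t\sigma_R(X_s)\,dW_s$ is well defined because $\sigma_R(X_s)\in\mathcal L_2(U,H)$. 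It is controlled through the Burkholder--Davis--Gundy inequality together with the Itô isometry $\mathbb E\|\int_0^t \Phi\,dW\|^2=\mathbb E\int_0^t\|\Phi\|_2^2\,ds$.
\end{itemize}
Working on a short interval, and then iterating or using an exponentially weighted norm, gives a unique continuous solution $X^R$.

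\textbf{Step 2 (gluing).} Let $\tau_R=\inf\{t:\|X^R_t\|\ge R\}$. Pathwise uniqueness for the Lipschitz problem shows that $X^R=X^{R'}$ on $[0,\tau_R\wedge\tau_{R'}]$ for $R<R'$, so $\tau_R$ is nondecreasing in $R$. We then define $X_t=X^R_t$ for $t\le\tau_R$. This gives a unique local solution up to $\tau_\infty=\lim_R\tau_R$.

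\textbf{Step 3 (a priori bound and non-explosion; the main obstacle).} We apply Itô's formula to $(1+\|X_t\|^2)^p$, stopped at $\tau_R$. Its generator produces the terms
\[
2p(1+\|x\|^2)^{p-1}\left(\langle b(x),x\rangle+\tfrac12\|\sigma(x)\|_2^2\right)+2p(p-1)(1+\|x\|^2)^{p-2}\|\sigma(x)^\star x\|_U^2 .
\]
\begin{itemize}[label={}]
\item The first group is bounded by $c_p(1+\|x\|^2)^p$ directly from \eqref{wp1}.
\item For the second term we use $\|\sigma^\star x\|\le\|\sigma\|_2\|x\|$ together with $\|\sigma\|_2^2\lesssim 1+\|x\|^2$.
\end{itemize}
The last bound is where care is needed: \eqref{wp1} controls only the combination $\langle b,x\rangle+\|\sigma\|^2$, not $\|\sigma\|^2$ separately. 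I would take the hypothesis in its usual intended form, with $\|\sigma(x)\|_2^2$ separately bounded by $c(1+\|x\|^2)$; this is what applies in our setting, where $\sigma$ comes from a covariance operator. If that is not available, one can still obtain the $p=1$ estimate from \eqref{wp1} alone, but the higher moments need the separate bound.

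Taking the supremum in time, the martingale term $\int 2p(1+\|X\|^2)^{p-1}\langle X,\sigma\,dW\rangle$ is handled by BDG plus Young's inequality, absorbing $\frac12\mathbb E\sup(1+\|X\|^2)^p$ into the left-hand side. Gronwall's lemma then gives
\[
\mathbb E\sup_{t\le T\wedge\tau_R}\|X_t\|^{2p}\le c_p(T)(1+\|x\|^{2p}),
\]
uniformly in $R$. Chebyshev's inequality yields $\mathbb P(\tau_R\le T)\le c_p(T)(1+\|x\|^{2p})/R^{2p}\to0$, so $\tau_\infty=\infty$ almost surely. The solution is therefore global, and Fatou's lemma as $R\to\infty$ gives \eqref{wp5}.
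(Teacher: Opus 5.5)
Your argument is essentially the paper's proof in Appendix~\ref{pf_thm_wellposed}: radial truncation, It\^o's formula for a $2p$-th power, BDG with absorption, Gronwall, and Chebyshev to get $\tau_R\to\infty$. The only differences are cosmetic, namely that you stop the glued solution at $\tau_R$ rather than estimating the truncated processes $X^n$ globally via \eqref{wp2}, and that you use $(1+\|x\|^2)^p$ in place of $\|x\|^{2p}$.

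Your caveat about the hypothesis is well founded, and the extra assumption is in fact necessary. The paper's proof tacitly uses $\|\sigma(x)\|_{\mathcal L_2}^2\lesssim 1+\|x\|^2$ as well: after It\^o's formula the term $\|X\|^{2p-2}\|\sigma\|^2$ carries a coefficient up to $p(2p-1)$, against $2p$ for $\|X\|^{2p-2}\langle X,b\rangle$, and the BDG step needs the bound too. Without it \eqref{wp5} is false. For $H=U=\mathbb R$, $b(x)=-x^3$, $\sigma(x)=x^2$, the left side of \eqref{wp1} vanishes identically, yet for $X_0=x>0$ the process $1/X_t$ is a $5$-dimensional Bessel process, so $\mathbb E|X_t|^q=\infty$ for every $q\ge 5$ and $t>0$. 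The separate growth bound on $\sigma$ is exactly what is verified for the SME in Section~\ref{subsec:wellposedness}.
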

This is a classical result in the case of finite-dimensional spaces, and we show how it adapts easily to the case of general Hilbert spaces in \Cref{pf_thm_wellposed}, following {the same arguments used e.g. in \cite[Theorem 2.3.5]{MR2380366}}.

\section{Stochastic Gradient Descent and Modified Equations in Infinite Dimensions}\label{Sec_SME}

In this section we follow the strategy of~\cite{li2019stochastic} to derive a stochastic modified equation (SME) for stochastic gradient descent (SGD) in an infinite-dimensional Hilbert space. We begin by formulating the SGD algorithm in this setting and analyzing the statistical properties of the stochastic gradients; see Section~\ref{subsec_sgd}. Based on these properties, we then derive a corresponding continuous-time stochastic evolution equation by matching the mean and covariance of the discrete stochastic gradients; see Section~\ref{subsec_sme}. Finally, we address the well-posedness of the resulting stochastic differential equation. In particular, we identify conditions on the diffusion operator that ensure existence and uniqueness of solutions; see Section~\ref{subsec:wellposedness}.

\subsection{SGD in Hilbert Spaces and Noise Structure}\label{subsec_sgd}
Consider the following optimization problem defined on an infinite-dimensional Hilbert space $H$:
\begin{align}\label{minimiz_problem}
    \min_{\phi\in H}\mathcal F(\phi):=\mathbb E\,\mathcal F_\gamma(\phi),
\end{align}
where $\{\mathcal F_\gamma:\gamma\in\Gamma\}$ is a family of functionals from $H$ to $\mathbb R$, and $\gamma$ is a $\Gamma$-valued random variable representing the source of uncertainty. The expectation is taken with respect to the distribution of $\gamma$.

A standard approach for solving \eqref{minimiz_problem} is gradient descent (GD),
\begin{align}\label{GD}
    \phi_{n+1}=\phi_n-\eta\, \nabla\mathcal F(\phi_n)
    =\phi_n-\eta\,\mathbb{E}\,\nabla\mathcal F_\gamma (\phi_n),
    \qquad n\ge0,
\end{align}
where $\eta > 0$ is the learning rate and $\nabla\mathcal F$ denotes the Fréchet derivative of $\mathcal F$. For the iteration \eqref{GD} to be well-defined, we assume
\begin{center}
(A1) $\mathcal F$ is Fréchet differentiable on $H$,
\qquad
(A2) the initial iterate $\phi_0$ belongs to $H$.
\end{center}

In practice, evaluating the full gradient
\[
\nabla\mathcal F (\phi)=\mathbb{E}\,\nabla\mathcal F_\gamma (\phi)
\]
requires computing an expectation over all realizations of $\gamma$, which can be computationally expensive, or even infeasible. To reduce this cost, one typically replaces the full gradient by a stochastic approximation and considers the stochastic gradient descent (SGD)
\begin{align}\label{SGD}
    \phi_{n+1}=\phi_n-\eta\, \nabla\mathcal F_{\gamma_n}(\phi_n)\,,
    \qquad n\ge0\,,
\end{align}
where $\{\gamma_n\}_{n\ge0}$ is a sequence of i.i.d. random variables with the same distribution as $\gamma$.

To facilitate the derivation of the continuous-time approximation, we rewrite the iteration \eqref{SGD} in the perturbation form
\begin{align}\label{SGD_noise}
\phi_{n+1}
=
\phi_n-\eta\,\nabla\mathcal F(\phi_n)+\eta\,V(\phi_n;\gamma_n)\,,
\end{align}
where the intrinsic noise term is defined by
\begin{align*}
V(\phi;\gamma):=\nabla\mathcal F(\phi)-\nabla\mathcal F_\gamma(\phi).
\end{align*}

By construction, the process $\{V(\phi_n;\gamma_n)\}_{n\ge0}$ satisfies the martingale difference property with respect to the natural filtration $\mathfrak F_n=\sigma(\gamma_0,\dots,\gamma_{n-1})$, namely
\[
\mathbb E\!\left[V(\phi_n;\gamma_n)\mid \mathfrak F_n\right]=0\,,
\qquad n\ge1\,.
\]

The randomness introduced by the stochastic gradients induces fluctuations in the dynamics. To quantify these fluctuations, we characterize the covariance structure of the noise. For every $\phi\in H$, we define the covariance operator $Q(\phi):H\to H$ by
\begin{equation*}
\langle Q(\phi)u,v\rangle
=
\mathrm{Cov}\big(\langle V(\phi;\gamma),u\rangle,\langle V(\phi;\gamma),v\rangle\big),
\qquad \forall u,v\in H.
\end{equation*}
or formally written as:
\[
Q(\phi)=\mathbb E_\gamma\big[V(\phi;\gamma)\otimes V(\phi;\gamma)\big]\,.
\]

It is immediate to check that  $Q(\phi)$ is a bounded and non-negative selfadjoint linear operator in $H$, for every $\phi$ in $H$.
In what follows, we shall assume
\begin{center}
(A3) For every $\phi\in H$, the stochastic gradient has finite second moment
\[
\mathbb E_\gamma\,\|\nabla\mathcal F_\gamma(\phi)\|^2<\infty .
\]
\end{center}
This implies that $Q(\phi) \in\,\mathcal{L}^+_1(H)$, as
\[\mbox{Tr}\,Q(\phi)=\sum_{i=1}^\infty \mathbb{E}_\gamma |\langle V(\phi;\gamma),e_i\rangle|^2=\mathbb{E}_\gamma \| V(\phi;\gamma)\|_H^2\leq \mathbb E_\gamma\,\|\nabla\mathcal F_\gamma(\phi)\|^2.\]
\subsection{Derivation of Infinite-Dimensional SMEs}\label{subsec_sme}

We now derive a stochastic differential equation on the Hilbert space $H$ that approximates the discrete SGD trajectory \eqref{SGD_noise}. Following the diffusion-approximation viewpoint, we seek a continuous-time stochastic evolution whose drift and covariance agree with those of the SGD updates.

We propose the following stochastic modified equation (SME):
\begin{align}\label{sde_model}
    d\varphi_t = -\nabla \mathcal{F}(\varphi_t)\,dt + \sqrt{\eta}\,\sigma(\varphi_t)\,dW_t,
\end{align}
where $W_t$ is the cylindrical Wiener process introduced in Definition~\ref{def2.4}, and $\sigma$ is to be defined. It is worth noting that the space of $\sigma$ and $W_t$ are not fixed yet, and they will be selected to ensure the consistency between~\eqref{sde_model} and~\eqref{SGD_noise}.

To relate \eqref{sde_model} to SGD, we discretize it by the Euler--Maruyama scheme with step size $\eta$:
\begin{align}\label{EM_scheme}
    \tilde{\varphi}_{n+1}
    =
    \tilde{\varphi}_n
    -
    \eta\,\nabla \mathcal{F}(\tilde{\varphi}_n)
    +
    \sqrt{\eta}\,\sigma(\tilde{\varphi}_n)\bigl(W_{(n+1)\eta}-W_{n\eta}\bigr).
\end{align}

Comparing \eqref{EM_scheme} with the perturbation form of SGD in \eqref{SGD_noise}, we see immediately that the drift terms coincide, and that both noise terms have mean zero. It remains to compare their covariance structures. For any $u,v\in H$,
\begin{align*}
&\mathrm{Cov}\Bigl(
\bigl\langle \sqrt{\eta}\,\sigma(\phi)\bigl(W_{(n+1)\eta}-W_{n\eta}\bigr),u\bigr\rangle,
\bigl\langle \sqrt{\eta}\,\sigma(\phi)\bigl(W_{(n+1)\eta}-W_{n\eta}\bigr),v\bigr\rangle
\Bigr) \\
&\qquad
=
\eta\,
\mathbb E\Big[
\bigl\langle W_{(n+1)\eta}-W_{n\eta},\sigma(\phi)^*u\bigr\rangle
\bigl\langle W_{(n+1)\eta}-W_{n\eta},\sigma(\phi)^*v\bigr\rangle
\Big] \\
&\qquad
=
\eta^2 \langle \sigma(\phi)\sigma(\phi)^*u,v\rangle\,.
\end{align*}
Therefore, in order for the Euler--Maruyama discretization of the SME~\eqref{EM_scheme} to match the first two moments of the SGD increment~\eqref{SGD_noise}, we need:
\begin{align}\label{cov_match}
    \mathrm{Cov}\Bigl(
    \sqrt{\eta}\,\sigma(\phi)\bigl(W_{(n+1)\eta}-W_{n\eta}\bigr)
    \Bigr)
    =\eta^2\sigma(\phi)\sigma(\phi)^*=
    \eta^2 Q(\phi)
    =
    \mathrm{Cov}\bigl(\eta V_n(\phi)\bigr).
\end{align}
Since the randomness in \eqref{SGD_noise} arises from sampling $\gamma\in\Gamma$, whereas the SME is driven by Gaussian noise, agreement at the level of these statistical quantities is the natural notion of approximation.

Now we aim to make the definition of $\sigma$ and $W_t$ explicit that satisfies \eqref{cov_match}. To do so, we recall the definition in~\eqref{eqn:L_omega_space}
and define $U:=L_\Gamma^2$.
The diffusion kernel $\sigma:U\to H$ becomes:
\begin{align}\label{sigma_def}
    \sigma(\phi)u:=\mathbb E_\gamma\,\big(V(\phi;\gamma)u(\gamma)\big),\ \ \ \ \ \ u\in U.
\end{align}
This also suggests that $W_t$ is $U$-valued. In the following lemma, we clarify that this construction is appropriate and satisfy~\eqref{cov_match}. Throughout the section, unless otherwise specified, vector norms and inner products are taken in $H$. Norms on operator spaces, such as $\mathcal L_2(U,H)$ or $U$, are indicated explicitly.

\begin{lemma}
    Let $\sigma$ be defined as in \eqref{sigma_def}. Then, $\sigma(\phi)$ satisfies \eqref{cov_match} and belongs to $\mathcal{L}_2(U,H)$, for each $\phi$.
\end{lemma}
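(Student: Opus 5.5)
The plan is to compute the adjoint $\sigma(\phi)^*:H\to U$ explicitly, use it to verify \eqref{cov_match}, and then get the Hilbert--Schmidt property from the trace of $\sigma(\phi)\sigma(\phi)^*$.

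\textbf{Boundedness and the adjoint.} Fix $\phi\in H$ and write $V(\gamma)=V(\phi;\gamma)$. By (A3), $V\in L^2(\Gamma,H)$, since $\mathbb E_\gamma\|V\|^2\le\mathbb E_\gamma\|\nabla\mathcal F_\gamma(\phi)\|^2<\infty$. For $u\in U$, Cauchy--Schwarz (Bochner integral) gives
\[
\|\sigma(\phi)u\|\le \mathbb E_\gamma\big(\|V(\gamma)\|\,|u(\gamma)|\big)\le \big(\mathbb E_\gamma\|V\|^2\big)^{1/2}\|u\|_U .
\]
Hence $\sigma(\phi)\in\mathcal L(U,H)$. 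For $h\in H$ and $u\in U$,
\[
\langle\sigma(\phi)u,h\rangle=\mathbb E_\gamma\big(\langle V(\gamma),h\rangle u(\gamma)\big)=\langle \langle V(\cdot),h\rangle, u\rangle_U ,
\]
so $\sigma(\phi)^*h=\langle V(\cdot),h\rangle\in U$.

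\textbf{Verifying \eqref{cov_match}.} Using the adjoint,
\[
\langle\sigma(\phi)\sigma(\phi)^*u,v\rangle=\langle\sigma(\phi)^*u,\sigma(\phi)^*v\rangle_U=\mathbb E_\gamma\big(\langle V,u\rangle\langle V,v\rangle\big).
\]
Because $\mathbb E_\gamma V=0$, this expectation equals $\mathrm{Cov}(\langle V,u\rangle,\langle V,v\rangle)=\langle Q(\phi)u,v\rangle$. Therefore $\sigma(\phi)\sigma(\phi)^*=Q(\phi)$. Combined with the covariance computation preceding \eqref{cov_match}, this gives the identity. That computation uses that $\langle W_{(n+1)\eta}-W_{n\eta},a\rangle_U$ is centered Gaussian with variance $\eta\|a\|_U^2$ and the corresponding polarized covariance for $a,b\in U$, which follows from Definition~\ref{def2.4}.

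\textbf{Hilbert--Schmidt property.} This is the step needing care. I will use $\|\sigma\|_{\mathcal L_2(U,H)}=\|\sigma^*\|_{\mathcal L_2(H,U)}$ and compute in an orthonormal basis $\{e_i\}$ of $H$, which avoids needing a basis of $U$ (in particular, separability of $U$ is not needed for this part):
\[
\|\sigma(\phi)^*\|_{2}^2=\sum_i\|\sigma(\phi)^*e_i\|_U^2=\sum_i\mathbb E_\gamma|\langle V,e_i\rangle|^2=\mathbb E_\gamma\|V\|^2 .
\]
The interchange of sum and expectation is justified by monotone convergence together with Parseval. Equivalently, the right-hand side is $\mathrm{Tr}\,Q(\phi)$, consistent with Lemma~\ref{lem_finite_trace}. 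This bound is finite by (A3), which completes the proof.
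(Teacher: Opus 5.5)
Your proposal is correct and follows the paper's proof. You identify $\sigma(\phi)^*h=\langle V(\phi;\cdot),h\rangle$, deduce $\sigma(\phi)\sigma(\phi)^*=Q(\phi)$, and compute the Hilbert--Schmidt norm as $\sum_j\|\sigma(\phi)^*f_j\|_U^2=\mathbb E\|V(\phi;\gamma)\|^2$, which is exactly what the paper's double sum over bases of $U$ and $H$ reduces to; your explicit boundedness estimate and the basis-of-$H$-only computation are minor refinements of the same argument.
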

\begin{proof}
    Note that for each $h\in H$,
\begin{align*}  \langle\sigma(\phi)u,h\rangle&=\left\langle\mathbb E\,\big(V(\phi;\gamma)u(\gamma)\big),h\right\rangle=\mathbb E\,\big(\left\langle V(\phi;\gamma),h\right\rangle u(\gamma)\big)=\big\langle\langle V(\phi;\gamma),h\rangle,u\big\rangle_U\,,
\end{align*}
which indicates:
\begin{align*}
    \big(\sigma(\phi)^*h\big)(\gamma)=\langle V(\phi;\gamma),h\rangle.
\end{align*}
Thus, one can check that $\sigma\sigma^*=Q$ as follows
\begin{align*}
\sigma(\phi)\sigma(\phi)^*h&=\mathbb E_\gamma\,\big(V(\phi;\gamma)\langle V(\phi;\gamma),h\rangle\big)=Q(\phi)h.
\end{align*}
This definition naturally leads to the fact that $\sigma$ is a Hilbert-Schmidt operator. Denote $(e_k)$ an orthonormal basis of $U$, we have
\begin{align*}
    \langle\sigma(\phi)e_k,h\rangle&=\mathbb E_\gamma\,\big(\langle V(\phi,\gamma),h\rangle e_k(\gamma)\big)=\big\langle\langle V(\phi;\gamma),h\rangle,e_k\big\rangle_U,
\end{align*}
and this implies
\begin{align*}
    \sum_k|\langle\sigma(\phi)e_k,h\rangle|^2&=\sum_k|\big\langle\langle V(\phi;\gamma),h\rangle,e_k\big\rangle_U|^2=\|\langle V(\phi;\gamma),h\rangle\|_U^2
\end{align*}
For an orthonormal basis $(f_j)$ of $H$, we get
\begin{align*}
    \|\sigma(\phi)\|_{\mathcal{L}_2(U,H)}^2&=\sum_k\|\sigma(\phi)e_k\|^2=\sum_k\sum_j|\langle\sigma(\phi)e_k,f_j\rangle|^2=\sum_j\|\langle V(\phi;\gamma),f_j\rangle\|_U^2\\
    &=\sum_j\mathbb E\big(\langle V(\phi;\gamma),f_j\rangle^2\big)=\mathbb E\|V(\phi;\gamma)\|^2<\infty,
\end{align*}
where the last inequality comes from (A3). This ends the proof.
\end{proof}
\subsection{Existence and Uniqueness of Strong Solution to SME}\label{subsec:wellposedness}

With the stochastic modified equation formally derived in \eqref{sde_model}, it remains to verify that the equation is well-posed. Recalling \Cref{thm_wellposed}, existence and uniqueness of solutions follow provided the drift and diffusion coefficients satisfy appropriate local Lipschitz and growth conditions. To this end we introduce the following assumption.
\begin{enumerate}[label=(A\arabic*)]
    \setcounter{enumi}{3}  
    \item There exist random variables $C_L(\gamma)\in L^2(\Gamma)$ and $C_G(\gamma)\in L^1(\Gamma)$ such that for each $\phi,\psi\in H$,
    \begin{align*}
        \|\nabla\mathcal F_\gamma(\phi)-\nabla\mathcal F_\gamma(\psi)\|\le C_L(\gamma)\|\phi-\psi\|,\quad\langle\nabla\mathcal F_\gamma(\phi),\phi\rangle\le C_G(\gamma)(1+\|\phi\|^2)\,.
    \end{align*}
\end{enumerate}
Under this assumption the SME~\eqref{sde_model} is well-posed.
{
\begin{theorem}
    Assume \textnormal{(A4)}. Then for every initial condition $\phi_0\in H$, equation \eqref{sde_model}  with $\sigma$ defined as \eqref{sigma_def} admits a unique adapted strong solution
\[
\phi_t
=
\phi_0
-
\int_0^t \nabla \mathcal{F}(\phi_s)\,ds
+
\sqrt{\eta}\int_0^t \sigma(\phi_s)\,dW(s),
\qquad t\ge0 ,
\]
such that
\[
\phi \in L^p(\Omega;C([0,T];H))
\]
for every $p\ge1$ and $T>0$. Moreover,
\[
\mathbb{E}\,\sup_{t\in[0,T]}\|\phi_t\|^p
\le
c_p(T)\big(1+\|\phi_0\|^p\big).
\]
\end{theorem}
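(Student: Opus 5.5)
The plan is to reduce the statement to \Cref{thm_wellposed}. We take $U=L^2_\Gamma$, $b(\phi)=-\nabla\mathcal F(\phi)$ and the diffusion coefficient $\sqrt{\eta}\,\sigma(\phi)$, with $\sigma$ as in \eqref{sigma_def}. It then suffices to check three things: (i) $b:H\to H$ is (globally) Lipschitz; (ii) $\sigma:H\to\mathcal L_2(U,H)$ is Lipschitz; (iii) the coercivity bound \eqref{wp1} holds. The moment estimate for general $p\ge1$ follows from \eqref{wp5} with $2p$ replaced by $p$, using Jensen/H\"older for $p<2$. Continuity of paths together with the moment bound gives $\phi\in L^p(\Omega;C([0,T];H))$.

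\emph{Step 1: regularity of the drift.} By (A4) and $C_L\in L^2(\Gamma)\subset L^1(\Gamma)$,
\[
\|\nabla\mathcal F(\phi)-\nabla\mathcal F(\psi)\|\le \mathbb E_\gamma\|\nabla\mathcal F_\gamma(\phi)-\nabla\mathcal F_\gamma(\psi)\|\le \mathbb E[C_L]\,\|\phi-\psi\|.
\]
Here we use that the Bochner expectation commutes with $\nabla$, which is implicit in the definition $\nabla\mathcal F=\mathbb E\nabla\mathcal F_\gamma$. Likewise, $\langle b(\phi),\phi\rangle=-\mathbb E\langle\nabla\mathcal F_\gamma(\phi),\phi\rangle$. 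Note that (A4) gives an \emph{upper} bound on $\langle\nabla\mathcal F_\gamma(\phi),\phi\rangle$, which is the wrong sign for bounding $\langle b(\phi),\phi\rangle$ from above. So instead we use the Lipschitz property: $|\langle b(\phi),\phi\rangle|\le \|\nabla\mathcal F(0)\|\|\phi\|+\mathbb E[C_L]\|\phi\|^2\le c(1+\|\phi\|^2)$. This requires $\nabla\mathcal F_\gamma(0)\in L^2(\Gamma;H)$, which follows from (A3) at $\phi=0$.

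\emph{Step 2: Hilbert--Schmidt Lipschitz bound for $\sigma$.} By linearity in $V$ and the computation in the preceding lemma,
\[
\|\sigma(\phi)-\sigma(\psi)\|^2_{\mathcal L_2(U,H)}=\mathbb E_\gamma\|V(\phi;\gamma)-V(\psi;\gamma)\|^2 .
\]
Since $V(\phi;\gamma)-V(\psi;\gamma)$ is the centered version of $\nabla\mathcal F_\gamma(\psi)-\nabla\mathcal F_\gamma(\phi)$, its second moment is at most $\mathbb E\|\nabla\mathcal F_\gamma(\phi)-\nabla\mathcal F_\gamma(\psi)\|^2\le \mathbb E[C_L^2]\|\phi-\psi\|^2$. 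This is where $C_L\in L^2(\Gamma)$ is needed. Combining this with $\|\sigma(0)\|_{\mathcal L_2}^2\le\mathbb E\|\nabla\mathcal F_\gamma(0)\|^2<\infty$ from (A3) yields the linear growth $\eta\|\sigma(\phi)\|^2_{\mathcal L_2(U,H)}\le c(1+\|\phi\|^2)$. Together with Step 1, this verifies \eqref{wp1}.

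\emph{Step 3: conclusion.} \Cref{thm_wellposed} now yields a unique global adapted continuous solution, in mild/strong form, which is the integral identity in the statement. It also yields the moment bound. I expect the main obstacle to be the coercivity condition in Step 1: the one-sided bound in (A4) does not by itself control $\langle-\nabla\mathcal F(\phi),\phi\rangle$ from above. I will therefore rely on the global Lipschitz bound plus integrability of $\nabla\mathcal F_\gamma(0)$ (from (A3)) rather than on the $C_G$ inequality. A secondary point is justifying the interchange of expectation and Fr\'echet derivative. Dominated convergence with $C_L\in L^1$ handles this.
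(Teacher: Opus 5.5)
Your proposal is correct and follows the same outline as the paper's proof. You verify global Lipschitz continuity of $\nabla\mathcal F$ with constant $\mathbb E\,C_L$. You verify it for $\sigma:H\to\mathcal L_2(U,H)$ via $\|\sigma(\phi)-\sigma(\psi)\|^2_{\mathcal L_2(U,H)}=\mathbb E\|V(\phi;\gamma)-V(\psi;\gamma)\|^2\le\mathbb E\,C_L^2\,\|\phi-\psi\|^2$. You then check \eqref{wp1} and invoke \Cref{thm_wellposed}.

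The one real difference is the coercivity condition for the drift. The paper cites $\langle\nabla\mathcal F(\phi),\phi\rangle\le\mathbb E\,C_G\,(1+\|\phi\|^2)$ from (A4). As you noticed, the drift is $b=-\nabla\mathcal F$, so this one-sided bound controls $\langle b(\phi),\phi\rangle$ from below, not from above as \eqref{wp1} requires. Your replacement is
\[
|\langle\nabla\mathcal F(\phi),\phi\rangle|\le\|\nabla\mathcal F(0)\|\,\|\phi\|+\mathbb E\,C_L\,\|\phi\|^2 .
\]
It uses only the Lipschitz part of (A4) and the finiteness of $\mathbb E\|\nabla\mathcal F_\gamma(0)\|^2$ from (A3), which the paper also uses in its growth bound for $\sigma$. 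It gives the needed two-sided bound, so your version closes this step where the paper's does not. It also shows the $C_G$ hypothesis is redundant, since $C_G(\gamma):=\|\nabla\mathcal F_\gamma(0)\|+C_L(\gamma)$ automatically works. If one wants a hypothesis of that type to do the job, it must be a lower bound, $\langle\nabla\mathcal F_\gamma(\phi),\phi\rangle\ge -C_G(\gamma)(1+\|\phi\|^2)$.

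The remaining differences are cosmetic:
\begin{itemize}
\item You get linear growth of $\sigma$ from $\|\sigma(0)\|_{\mathcal L_2(U,H)}$ plus the Lipschitz bound. The paper uses $\|\sigma(\phi)\|^2_{\mathcal L_2(U,H)}\le 4\,\mathbb E\|\nabla\mathcal F_\gamma(\phi)\|^2$; both work.
\item Your ``variance is at most the second moment'' phrasing is the correct form of the paper's displayed identity. The paper subtracts $(\mathbb E\|\cdot\|)^2$ where it should subtract $\|\mathbb E(\cdot)\|^2$, though its final inequality still holds.
\item Your Jensen argument for $p\in[1,2)$ fills in a step the paper leaves implicit.
\end{itemize}
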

\begin{proof}
    By \Cref{thm_wellposed}, it is enough to show the local Lipschitz continuity and linear growth for both $\nabla\mathcal F$ and $\sigma$ in \eqref{sde_model}.

{\em Condition for $\nabla\mathcal F$.} We can compute the Lipschitz continuity and linear growth of $\nabla\mathcal F$
\begin{align*}
    \|\nabla\mathcal F(\phi)-\nabla\mathcal F(\psi)\|\le\mathbb E\|\nabla\mathcal F_\gamma(\phi)-\nabla\mathcal F_\gamma(\psi)\|\le\mathbb E C_L(\gamma)\,\|\phi-\psi\|,
\end{align*}
and
\begin{align*}
    \langle\nabla\mathcal F(\phi),\phi\rangle=\mathbb E\langle\nabla\mathcal F_\gamma(\phi),\phi\rangle\le \mathbb E C_G(\gamma)\,(1+\|\phi\|^2).
\end{align*}
Noting that $|\mathbb E C_L(\gamma)|^2\leq \mathbb E C^2_L(\gamma)<\infty$ by Jensen,  and by \textnormal{(A4)}, conditions for the velocity field are satisfied in \Cref{thm_wellposed}.

{\em Condition for $\sigma$.} For any orthonormal basis $(e_k)$ and $(f_j)$ for $U$ and $H$, respectively, we use Parseval's identity to derive
\begin{align*}
    \|\sigma(\phi)-\sigma(\psi)\|_{\mathcal{L}_2(U,H)}^2&=\sum_k\|\big(\sigma(\phi)-\sigma(\psi)\big)e_k\|^2=\sum_k\big\|\mathbb E\big[\big(V(\phi;\gamma)-V(\psi;\gamma)\big)e_k(\gamma)\big]\big\|^2\\
    &=\sum_{k,j}\left\langle\mathbb E\,\big(\big(V(\phi;\gamma)-V(\psi;\gamma)\big)e_k(\gamma)\big),f_j\right\rangle^2\\
    &=\sum_{k,j}\left(\mathbb E\,\big(\langle V(\phi;\gamma)-V(\psi;\gamma),f_j\rangle e_k(\gamma)\big)\right)^2\\
    &=\sum_j\mathbb E\,|\langle V(\phi;\gamma)-V(\psi;\gamma),f_j\rangle|^2=\mathbb E\|V(\phi;\gamma)-V(\psi;\gamma)\|^2.
\end{align*}
Using the above relation, one can derive:
\begin{align*}
    \|\sigma(\phi)-\sigma(\psi)\|_{\mathcal{L}_2(U,H)}^2&=\mathbb E\|V(\phi;\gamma)-V(\psi;\gamma)\|^2\\
    &=\mathbb E\|\nabla\mathcal F_\gamma(\phi)-\nabla\mathcal F_\gamma(\psi)\|^2-\left(\mathbb E\|\nabla\mathcal F_\gamma(\phi)-\nabla\mathcal F_\gamma(\psi)\|\right)^2\\
    &\le\mathbb E\|\nabla\mathcal F_\gamma(\phi)-\nabla\mathcal F_\gamma(\psi)\|^2\le\mathbb E C_L(\gamma)^2\,\|\phi-\psi\|^2,
\end{align*}
with \textnormal{(A4)}, the Lipschitz continuity of $\sigma$ is confirmed.
Now, it only remains to check the linear growth of $\sigma$. Through Jensen's inequality,
\begin{align}\label{sigma_bound}
    \|\sigma(\phi)\|_{\mathcal{L}_2(U,H)}^2=\mathbb E\|V(\phi;\gamma)\|^2\le2\|\nabla\mathcal F(\phi)\|^2+2\mathbb E\|\nabla\mathcal F_\gamma(\phi)\|^2\le4\mathbb E\|\nabla\mathcal F_\gamma(\phi)\|^2.
\end{align}
Note that
\begin{align*}
    \mathbb E\|\nabla\mathcal F_\gamma(\phi)\|^2&\le2\mathbb E\|\nabla\mathcal F_\gamma(\phi)-\nabla\mathcal F(0)\|^2+2\|\nabla\mathcal F(0)\|^2\\
    &\le2\,\mathbb E C_L(\gamma)^2\,\|\phi\|^2+2\mathbb E\|\nabla\mathcal F_\gamma(0)\|^2\lesssim(1+\|\phi\|^2),
\end{align*}
where we used \textnormal{(A4)}. Plugging this back in~\eqref{sigma_bound} we justify the condition for $\sigma$ being satisfied in~\Cref{thm_wellposed}, and hence complete the proof.
\end{proof}
}

\section{Weak Error Analysis}\label{Sec_anal}
In this section, we analyze the weak error between the SGD iteration and the discretized SME. Our main objective is to establish a second-order weak approximation result, showing that the discrepancy between the two dynamics is of order $\mathcal O(\eta^2)$ on finite time intervals. This justifies the use of SME as a mean to understand SGD solver.

We rewrite $\phi_n$ as the SGD iterate and $\tilde\varphi_n$ as the discretized SME:
\begin{subequations}
\begin{align}
    \phi_{n+1} &= \phi_n + \eta b(\phi_n) + \eta V(\phi_n), \label{iter_disc_a}\\
    \tilde\varphi_{n+1} &= \tilde\varphi_n + \eta b(\tilde\varphi_n) + \eta \sigma(\tilde\varphi_n)\Delta W \label{iter_disc_b}
\end{align}
\end{subequations}

where  $\sigma$ is chosen as in \Cref{Sec_SME}, so that for every $\phi \in\,H$ the covariance of $\sigma(\phi)\Delta W$
matches that of the stochastic gradient noise $V(\phi)$.

To quantify the weak error, we introduce the class of test functionals $\mathcal G$ defined by
\begin{align*}
  \mathcal G:=\{g: H\to\mathbb R~\mid~\|D^i\,g\|_\infty:=\sup_{\phi \in\,H}\|D^i g(\phi)\|_{\mathcal{L}^i(H)}<\infty,\quad i=1,2,3\},
\end{align*}
where $D^i g$ denotes the $i$-th Fr\'echet derivative of $g$. Throughout the section, when the expectation $\mathbb{E}$ appears, the expectation is taken with respect to both $\phi_n$ and $\tilde{\varphi}_n$, in both $\gamma_n$ for $\Delta W$ space.

We first present the theorem in an informal manner:
\begin{theorem}[informal statement]\label{thm_weak_conv_informal}
    Suppose that $\phi_n$ and $\tilde\varphi_n$ are stochastic iterative processes starting from a fixed initial state $\phi_0\in H$ generated by the first and second stochastic iterations in \eqref{iter_disc_a}-\eqref{iter_disc_b}, respectively. Assume that $b$ and $\sigma$ satisfy certain conditions and let $g$ be a smooth functional, then
    \begin{align*}
        \max_{k=0,\cdots,\lfloor T/\eta\rfloor}\left|\mathbb E\big(g(\tilde\varphi_k)-g(\phi_k)\big)\right|=\mathcal{O}(\eta^2)\,.
    \end{align*}
\end{theorem}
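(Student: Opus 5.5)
The plan is the standard Lindeberg/telescoping argument comparing two Markov chains that share the same drift and whose one-step increments agree in their first two conditional moments. For the discrete chain $\tilde\varphi$ define the backward functions
\[
u_k(\psi):=\mathbb E\big[g(\tilde\varphi_{n})\,\big|\,\tilde\varphi_{k}=\psi\big],\qquad k=0,\dots,n,
\]
so that $u_n=g$ and $u_k(\psi)=\mathbb E\,u_{k+1}\big(\psi+\eta b(\psi)+\eta\sigma(\psi)\Delta W\big)$. Since $\phi_0=\tilde\varphi_0$, we have $\mathbb E g(\tilde\varphi_n)=u_0(\phi_0)$. Telescoping along the SGD path gives
\[
\mathbb E g(\phi_n)-\mathbb E g(\tilde\varphi_n)=\sum_{k=0}^{n-1}\mathbb E\Big[\mathbb E\big[u_{k+1}(\phi_{k}+\eta b(\phi_k)+\eta V(\phi_k;\gamma_k))\mid\mathfrak F_k\big]-\mathbb E_{\Delta W}\,u_{k+1}(\phi_k+\eta b(\phi_k)+\eta\sigma(\phi_k)\Delta W)\Big].
\]
Each summand is a one-step local error with both chains started at the same point $\psi=\phi_k$. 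With $n\le T/\eta$ terms, it suffices to show that each local error is $O(\eta^3)(1+\|\psi\|^m)$ and that the SGD iterates have uniformly bounded moments.

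For the local error, expand $u_{k+1}$ around $\psi+\eta b(\psi)$ using \Cref{Taylor_exp} to second order with integral remainder. The increments $\eta V(\psi;\gamma)$ and $\eta\sigma(\psi)\Delta W$ both have mean zero. Their second moments agree by \eqref{cov_match}, interpreting $\Delta W$ with the normalization that makes $\mathrm{Cov}(\eta\sigma\Delta W)=\eta^2Q$. The zeroth-, first- and second-order terms therefore cancel exactly, using the identity $\mathbb E\,D^2u(\cdot)(X)=\mathrm{Tr}(D^2u\,\mathrm{Cov}X)$, which makes sense because $Q(\psi)$ is trace class. The remainders are bounded by $\tfrac16\|D^3u_{k+1}\|_\infty\,\eta^3\big(\mathbb E\|V(\psi;\gamma)\|^3+\mathbb E\|\sigma(\psi)\Delta W\|^3\big)$. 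This requires a third-moment assumption on $\nabla\mathcal F_\gamma$, strengthening (A3) with polynomial growth, say $\mathbb E_\gamma\|\nabla\mathcal F_\gamma(\phi)\|^3\lesssim 1+\|\phi\|^3$. For the Gaussian term, $\mathbb E\|\sigma\Delta W\|^3\lesssim(\mathrm{Tr}\,\sigma\sigma^*)^{3/2}=(\mathrm{Tr}\,Q)^{3/2}$, so it is controlled by the second moment via (A4). Summing over $k$ then gives $n\cdot O(\eta^3)=O(T\eta^2)$, uniformly in $n\le T/\eta$, which is the claimed bound.

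Two auxiliary estimates remain, and the second is the main obstacle. The first is moment bounds $\sup_{k\le T/\eta}\mathbb E\|\phi_k\|^3\le C(T)(1+\|\phi_0\|^3)$. These follow from the recursion \eqref{iter_disc_a}: expand $\|\phi_{k+1}\|^2$ and use the Lipschitz and growth bounds in (A4) together with discrete Gronwall. The second is that the backward functions $u_k$ keep bounded first, second and third Fréchet derivatives, uniformly in $k\le n\le T/\eta$. I would prove this by induction, differentiating $u_k(\psi)=\mathbb E\,u_{k+1}(\Psi(\psi,\Delta W))$ with $\Psi(\psi,w)=\psi+\eta b(\psi)+\eta\sigma(\psi)w$. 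By the chain rule, $D^3u_k$ involves $D^ju_{k+1}$ composed with $D^i\Psi$ for $i\le3$. This needs $b\in C^3$ with bounded derivatives, and $\sigma$ with bounded derivatives up to order three in $\mathcal L_2(U,H)$, which I would add to the "certain conditions" by requiring $\nabla\mathcal F_\gamma$ to have $L^2(\Gamma)$-integrable bounded derivatives. With these, each step multiplies the derivative norms by at most $(1+C\eta)$, giving $\|D^iu_k\|_\infty\le e^{CT}\sum_{j\le i}\|D^jg\|_\infty$ for $g\in\mathcal G$. The delicate part is the cross terms involving $D\sigma(\psi)[h]\,\Delta W$. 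They have mean zero at first order but contribute $O(\eta)$ through their second moment. Closing the induction without losing a factor of $\eta^{-1}$ requires exactly this Hilbert–Schmidt control, so that $\mathbb E\|D\sigma(\psi)[h]\Delta W\|^2\lesssim\|h\|^2$ in infinite dimensions.
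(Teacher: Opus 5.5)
Your proposal is correct and follows essentially the same route as the paper. Your backward functions $u_k$ are exactly the paper's iterated test functions $g_{n-k}=A^{n-k}g$, and the argument rests on the same telescoping decomposition \eqref{eqn:induction}, the same one-step cancellation through second order by matching mean and covariance (\Cref{lem_one_step}), and the same chain-rule propagation of bounded derivatives (\Cref{lem_ug}).

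The differences are minor. The paper assumes the uniform bound $\mathcal D=\sup_{\phi}\mathbb E\big(\|\sigma(\phi)\Delta W\|^3+\|V(\phi)\|^3\big)<\infty$, so it needs no moment bounds on the SGD iterates; your polynomial-growth assumption requires the extra moment estimate you sketch. The paper also solves the derivative recursion explicitly (\Cref{cor_gn3bound}) rather than using your $(1+C\eta)$-per-step Gronwall bound, which is simpler and avoids the $1/\|Db\|_\infty$ factors in the paper's constants.
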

The statement and the conditions are to be made more precise in Theorem~\ref{thm_weak_conv}. At first glance, the result may appear somewhat counterintuitive. Indeed, for a stochastic differential equation, the Euler–Maruyama discretization typically achieves only first-order accuracy in the weak sense with respect to the time step $\eta$. We emphasize, however, that the two quantities compared in the theorem are $\phi_k$ and $\varphi_k$. In particular, we are not comparing a continuous-time SDE solution with its discrete-time approximation. Instead, both $\phi_k$ and $\varphi_k$ arise from discrete dynamics. The result therefore shows that these two discrete processes approximate each other to second order in the step size.

The main strategy of analyzing error comes down to induction. To do so, we first unify two notations:
\begin{itemize}
    \item $\phi_n(\phi,k)$ denotes solution to~\eqref{iter_disc_a} at $t_n=n\eta$ with the initial condition $\phi_k=\phi$ placed at $t_k$;
    \item $\widetilde\varphi_n(\phi,k)$ denotes solution to~\eqref{iter_disc_b} at $t_n=n\eta$ with the initial condition $\widetilde\varphi_k=\phi$ placed at $t_k$.
\end{itemize}

Now we follow~\cite{li2019stochastic} and propagate solutions:
\begin{align*}
    	\mathbb E\,g(\tilde\varphi_k)&=\mathbb E\,g(\tilde\varphi_k(\phi_0,0))\\
        &=\mathbb E\,g\Big(\tilde\varphi_k\big(\tilde\varphi_1(\phi_{0},0),1\big)\Big)-\mathbb E\,g\Big(\tilde\varphi_k(\phi_1,1)\Big)+\mathbb E\,g\Big(\tilde\varphi_k(\phi_1,1)\Big)\\
        &=\mathbb E\,g\Big(\tilde\varphi_k\big(\tilde\varphi_1(\phi_{0},0),1\big)\Big)-\mathbb E\,g\Big(\tilde\varphi_k(\phi_1,1)\Big)+\mathbb E\,g\Big(\tilde\varphi_k\big(\tilde\varphi_2(\phi_{1},1),2\big)\Big)-\mathbb E\,g\Big(\tilde\varphi_k(\phi_2,2)\Big)+\mathbb E\,g\Big(\tilde\varphi_k(\phi_2,2)\Big)\\
        &=\cdots\\
        &=\sum_{l=1}^{k-1}\left(\mathbb E\,g\Big(\tilde\varphi_k\big(\tilde\varphi_l(\phi_{l-1},l-1),l\big)\Big)-\mathbb E\,g\Big(\tilde\varphi_k(\phi_l,l)\Big)\right)+\mathbb E\,g\Big(\tilde\varphi_k(\phi_{k-1},k-1)\Big)\,.
    \end{align*}
    As a consequence,
    \begin{align}
    	\mathbb E\,g(\tilde\varphi_k)-\mathbb E\,g(\phi_k)&=\sum_{l=1}^{k-1}\Big(\mathbb E\,g\big(\tilde\varphi_k\big(\tilde\varphi_l(\phi_{l-1},l-1),l\big)\big)-\mathbb E\,g\big(\tilde\varphi_k(\phi_l,l)\big)\nonumber\\
    	&\hspace{.4cm}+\mathbb E\,g\big(\tilde\varphi_k(\phi_{k-1},k-1)\big)-\mathbb E\,g\big(\phi_k(\phi_{k-1},k-1)\big)\Big)\nonumber\\
        &=\sum_{l=1}^{k-1}G_l+G_k\,.\label{eqn:induction}
    \end{align}
    For $G_k$, this is to use $g$ to test $\tilde\varphi_k(\phi_{k-1},k-1) - \phi_k(\phi_{k-1},k-1)$. For each $G_l$, reckoning $\phi_l=\phi_l(\phi_{l-1},l-1)$, this term comes down to using $g\big(\tilde\varphi_k(\cdot,l))$ to test $\tilde\varphi_l(\phi_{l-1},l-1) - \phi_l(\phi_{l-1},l-1)$. This clearly suggests two components to be analyzed:
    \begin{itemize}
        \item[$\bullet$] One step error: $\tilde\varphi_l(\phi_{l-1},l-1) - \phi_l(\phi_{l-1},l-1)$ needs to be small for all $l\leq k$.
        \item[$\bullet$] Test function regularity: $g$ and $g\big(\tilde\varphi_k(\cdot,l))$ have to belong to the same test function class with the same regularity.
    \end{itemize}
    We prepare these two pieces in Section~\ref{sec:preparation_proof} and sum them up for the full proof in Section~\ref{sec:global_proof}.

\subsection{One step error and propagation of regularity}\label{sec:preparation_proof}

Throughout this section, since all results are local in time with iteration conducted over one step, we suppress the time index $n$ and denote the stochastic gradient noise $V_n$ by $V$ for notational convenience. We emphasize that $V$ is a random mapping due to its dependence on the random variable $\gamma$, that is, $V(\phi_n)$ represents a stochastic noise. We also suppress the index $n$ in  $\Delta W_n$, the normalized increment of the cylindrical Wiener process $W_t$, and denote it by $\Delta W$.


The two main pieces for the proof are: 1. analyzing one-step error; and 2. preserving regularity of the test function along iteration. The two results are summarized in the following two lemmas.
\begin{lemma}[One-step Error Estimate]\label{lem_one_step}
    Let $\phi_1$ and $\tilde{\varphi}_1$ be the solutions to \eqref{iter_disc_a}-\eqref{iter_disc_b}, starting from the same deterministic data $\phi_0 = \tilde{\varphi}_0 = \phi$. Then, for any $g \in \mathcal{G}$,
    \begin{align*}
        \big| \mathbb{E}\,\left(g(\phi_1) - g(\tilde{\varphi}_1)\right) \big| 
        \le \eta^3 \|D^3 g\|_\infty \, \mathbb{E}\,\left( \|\sigma(\phi)(\Delta W)\|^3 + \|V(\phi)\|^3 \right).
    \end{align*}
\end{lemma}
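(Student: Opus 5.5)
We Taylor-expand $g$ around the common starting point $\phi$ with \Cref{Taylor_exp}, taking $n=2$. Write the two increments as $h_1:=\eta b(\phi)+\eta V(\phi)$ and $h_2:=\eta b(\phi)+\eta\sigma(\phi)\Delta W$. This gives
\begin{align*}
g(\phi+h_i)=g(\phi)+Dg(\phi)\,h_i+\tfrac12 D^2g(\phi)\,(h_i)+\int_0^1\tfrac{(1-t)^2}{2}D^3g(\phi+th_i)\,(h_i)\,dt,
\end{align*}
for $i=1,2$. We take expectations and subtract. The zeroth-order terms cancel. The first-order terms also cancel: $Dg(\phi)$ is a fixed bounded linear functional because $\phi$ is deterministic, and $\mathbb E\,V(\phi)=0=\mathbb E\,\sigma(\phi)\Delta W$, so both give $\eta\,Dg(\phi)b(\phi)$.

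For the second-order terms we expand the bilinear form:
\begin{align*}
D^2g(\phi)(h_i,h_i)=\eta^2D^2g(\phi)(b,b)+2\eta^2D^2g(\phi)(b,\xi_i)+\eta^2D^2g(\phi)(\xi_i,\xi_i),
\end{align*}
where $\xi_1=V(\phi)$ and $\xi_2=\sigma(\phi)\Delta W$. The cross terms vanish in expectation, again because the noises have mean zero. For the quadratic noise term, let $A=D^2g(\phi)$, a bounded symmetric operator on $H$. Then
\begin{align*}
\mathbb E\,\langle A\xi,\xi\rangle=\sum_j\mathbb E\langle\xi,Ae_j\rangle\langle\xi,e_j\rangle=\sum_j\langle \mathrm{Cov}(\xi)Ae_j,e_j\rangle=\mathrm{Tr}(\mathrm{Cov}(\xi)A).
\end{align*}
The Fubini interchange is justified because $\mathbb E\|\xi\|^2<\infty$ and $\mathrm{Cov}(\xi)$ is trace class by \Cref{lem_finite_trace}. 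By \eqref{cov_match}, with $\Delta W$ normalized so that $\mathrm{Cov}(\sigma(\phi)\Delta W)=\sigma\sigma^*=Q(\phi)=\mathrm{Cov}(V(\phi))$, the two second-order contributions coincide. This covariance-matching step is the conceptual heart of the proof. The only real care needed is the rigorous identification $\mathbb E D^2g(\phi)(\xi)=\mathrm{Tr}(QA)$ in infinite dimensions. We would prove it by expanding along an orthonormal basis and using dominated convergence, with domination by $\|A\|\,\|\xi\|^2$.

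Only the remainders are then left. We bound each by $\frac16\|D^3g\|_\infty\,\mathbb E\|h_i\|^3$. The stated bound, however, contains only $\eta^3\|\xi_i\|^3$, with no $b$ terms and a constant $1$. So we will not simply bound $\|h_i\|^3$. Instead, we plan to expand the Taylor series around the deterministic point $\phi+\eta b(\phi)$ rather than around $\phi$. Both iterates are then this common point plus a mean-zero noise $\eta\xi_i$. The zeroth- and first-order terms cancel exactly, and the second-order terms cancel by the trace argument above. The difference of remainders is then bounded by
\begin{align*}
\tfrac16\eta^3\|D^3g\|_\infty\big(\mathbb E\|\xi_1\|^3+\mathbb E\|\xi_2\|^3\big)\le \eta^3\|D^3g\|_\infty\,\mathbb E\big(\|\sigma(\phi)\Delta W\|^3+\|V(\phi)\|^3\big).
\end{align*}
This requires the third moments to be finite; if they are infinite the inequality holds trivially. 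The main obstacle is therefore this recentering choice, which removes the drift from the remainder, together with the rigorous trace identity for the second-order term.
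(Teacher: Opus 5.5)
Your proposal is correct and is essentially the paper's argument. The paper's displayed expansion contains no drift terms, so it is implicitly a Taylor expansion about the common deterministic point $\phi+\eta b(\phi)$ (written loosely as $Dg(\phi)$, with the factorials dropped); the first- and second-order terms then cancel by mean and covariance matching, and the third-order Lagrange remainder is bounded by $\|D^3g\|_\infty$. Your version makes this recentering explicit, justifies the second-order cancellation in infinite dimensions by the basis expansion with dominated convergence (which the paper only asserts), and gives the sharper constant $\tfrac16$.
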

\begin{proof}
    One uses the Taylor expansion to $g(\tilde\varphi_1)$ and $g(\phi_1)$ to derive
    \begin{align*}
        \mathbb E\,\left(g(\tilde\varphi_1)-g(\phi_1)\right)&=\mathbb E\left(g\Big(\phi+\eta b(\phi)+\eta\sigma(\phi)(\Delta W)\Big)-g\Big(\phi+\eta b(\phi)+\eta V(\phi)\Big)\right)\\[10pt]
        &=\mathbb E\left(\eta Dg(\phi)\,(\sigma(\phi)(\Delta W)-V(\phi))+\eta^2D^2 g(\phi)[(\sigma(\phi)(\Delta W))^2-(V(\phi))^2]\right)\\[10pt]
        &\hspace{.4cm}+\eta^3\mathbb E\left(D^3 g(\hat\varphi(\phi,\omega))\,(\sigma(\phi)(\Delta W))^3-D^3 g(\hat\phi(\phi,\omega))\,(V(\phi))^3\right)
    \end{align*}
    for some $\hat\varphi(\phi,\omega),\hat\phi(\phi,\omega)\in H$ depending on the sample $\omega\in\Omega$. Since $\sigma(\phi)$ is set to be the same in the expectation and covariance with $V(\phi)$, the first and second order terms are cancelled. Thus, we use the boundedness of the third derivative in $g$ to obtain the desired result.
\end{proof}

The second component needed for our proof is the propagation of regularity of the test functional, so that the functional space $\mathcal{G}$ is invariant along the iterate. Formally, for every $g$ we define the functional $A(g)$ as:
\begin{align}\label{oper_A}
    A(g)(\cdot): H\to\mathbb{R}\,\quad\text{with}\quad A(g)(\phi):= \mathbb{E}\, g(\tilde{\varphi}_1(\phi,0))\,,
\end{align}
and thus $A$ is an operator that takes values in $g\in\mathcal{G}$ to produce another functional. We are to show $A$ is invariant in $\mathcal{G}$ in the sense that $A(g)\in\mathcal{G}$ also. To quantify them, we define some quantities:
\begin{align}\label{eqn:def_C_i}
    C_i(b,\sigma,\eta)&=(a_i+\eta\|D^ib\|_\infty)^2+\eta^2\|D^i\sigma\|_\infty^2,\quad i=1,2,3,\\[8pt]
    C_4(b,\sigma,\eta)&=(1+\eta\|Db\|_\infty)^4+6\eta^2(1+\eta\|Db\|_\infty)^2\|D\sigma\|_\infty^2+3\eta^4\|D\sigma\|_\infty^4\,,\nonumber
\end{align}
for the constants $a_1=1$ and $a_2=a_3=0$. The propagation of regularity is summarized in the following lemma.

\begin{lemma}\label{lem_ug}
    Suppose that $g\in\mathcal G$ and define a functional $u=A(g)$ according to~\eqref{oper_A}, then:
    \begin{align*}
        u(\phi):=\mathbb E\,g(\tilde\varphi_1(\phi,0))=\mathbb E\,\left(g\left(\phi+\eta b(\phi)+\eta\sigma(\phi)(\Delta W)\right)\right).
    \end{align*}
    Then, we have $u\in\mathcal G$. In particular, we have the following control over derivatives:
    \begin{align*}
        \|Du\|_\infty&\le\sqrt{C_1}\|Dg\|_\infty\,,\\[10pt]
        \|D^2 u\|_\infty&\le C_1\|D^2 g\|_\infty+\sqrt{C_2}\|Dg\|_\infty\,,\\[10pt]
        \|D^3 u\|_\infty&\le C_4\|D^3 g\|_\infty+\sqrt{C_1C_2}\|D^2 g\|_\infty+\sqrt{C_3}\|Dg\|_\infty\,.
    \end{align*}
\end{lemma}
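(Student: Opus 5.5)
The approach is to differentiate under the expectation with the chain rule, applied to the composition $u(\phi)=\mathbb E\,g(\Psi(\phi))$ with the random map $\Psi(\phi):=\phi+\eta b(\phi)+\eta\sigma(\phi)(\Delta W)$. Here $\Delta W$ is a fixed sample of the (normalized) Gaussian increment, and $\Psi$ is affine in $\Delta W$. First I justify exchanging $D$ and $\mathbb E$. The difference quotients of $g\circ\Psi$ are dominated by $\|Dg\|_\infty\,\|D\Psi(\phi)\|$, which has finite moments of every order because $\|D\sigma\|_\infty<\infty$ and $\sigma(\cdot)\Delta W$ is Gaussian. Dominated convergence then gives
\[
Du(\phi)h=\mathbb E\,Dg(\Psi)\,D\Psi(\phi)h .
\]
Iterating the same argument gives the Fa\`a di Bruno formulas
\begin{align*}
D^2u(\phi)(h,h)&=\mathbb E\big[D^2g(\Psi)(D\Psi h,D\Psi h)+Dg(\Psi)D^2\Psi(h,h)\big],\\
D^3u(\phi)(h,h,h)&=\mathbb E\big[D^3g(\Psi)(D\Psi h)^{3}+3D^2g(\Psi)(D\Psi h,D^2\Psi(h,h))+Dg(\Psi)D^3\Psi(h,h,h)\big].
\end{align*}
Since the norms in $\mathcal L^i(H)$ of symmetric multilinear forms may be tested on the diagonal (up to polarization constants, which I would absorb or avoid by testing on $(h_1,\dots,h_i)$ directly), it suffices to bound these expressions for $\|h\|=1$.

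The key step is the moment bounds on the derivatives of $\Psi$, which are where the constants $C_i$ and $C_4$ come from. We have
\[
D^i\Psi(\phi)(h,\dots)=a_ih+\eta D^ib(\phi)(h,\dots)+\eta\, D^i\sigma(\phi)(h,\dots)\Delta W .
\]
The deterministic part $X$ has norm at most $a_i+\eta\|D^ib\|_\infty$, and the random part $Y$ is centered Gaussian in $H$. Expanding $\|X+Y\|^2$ kills the cross term, so
\[
\mathbb E\|D^i\Psi h\|^2\le (a_i+\eta\|D^ib\|_\infty)^2+\eta^2\mathbb E\|D^i\sigma(h)\Delta W\|^2 .
\]
With the normalization of $\Delta W$ implied by the covariance matching (so that $\mathbb E\|\eta\sigma\Delta W\|^2$ is of order $\eta^2\|\sigma\|_{\mathcal L_2}^2$, matching $\mathbb E\|\eta V\|^2$), this is at most $C_i$. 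The bounds $\|Du\|_\infty\le\sqrt{C_1}\|Dg\|_\infty$ and the first-order terms $\sqrt{C_i}\|Dg\|_\infty$ then follow by Cauchy--Schwarz (Jensen). The $D^2g$ term in $D^2u$ is bounded by $\|D^2g\|_\infty\mathbb E\|D\Psi h\|^2\le C_1\|D^2g\|_\infty$. The mixed term in $D^3u$ is bounded by Cauchy--Schwarz:
\[
\mathbb E\big[\|D\Psi h\|\,\|D^2\Psi(h,h)\|\big]\le\sqrt{C_1C_2}.
\]
The combinatorial factor $3$ must be handled by symmetrization or absorbed into the constant; I would check the paper's intended normalization here.

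The step I expect to be the main obstacle is the fourth moment $\mathbb E\|D\Psi h\|^3\le(\mathbb E\|D\Psi h\|^4)^{3/4}$ needed for the $D^3g$ term. Writing $D\Psi h=X+Y$, I would expand $\|X+Y\|^4=(\|X\|^2+2\langle X,Y\rangle+\|Y\|^2)^2$ and drop the odd Gaussian moments. The terms that remain are $\|X\|^4$, $4\mathbb E\langle X,Y\rangle^2$, $2\|X\|^2\mathbb E\|Y\|^2$, and $\mathbb E\|Y\|^4$. Gaussian hypercontractivity, in the form $\mathbb E\|Y\|^4\le3(\mathbb E\|Y\|^2)^2$, together with $\mathbb E\langle X,Y\rangle^2\le\|X\|^2\mathbb E\|Y\|^2$, yields exactly
\[
(1+\eta\|Db\|_\infty)^4+6\eta^2(1+\eta\|Db\|_\infty)^2\|D\sigma\|^2_\infty+3\eta^4\|D\sigma\|_\infty^4=C_4 .
\]
Since $C_4\ge1$, we have $C_4^{3/4}\le C_4$, which gives the stated bound. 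Together these estimates show $u\in\mathcal G$.
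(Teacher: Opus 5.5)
Your argument is the paper's proof: the chain rule for $u=\mathbb E\,g(\tilde\varphi_1(\phi))$, second-moment bounds $\mathbb E\|D^i\tilde\varphi_1(\phi)(h_1,\dots,h_i)\|^2\le C_i\prod_j\|h_j\|^2$ from the vanishing Gaussian cross term, the fourth-moment expansion with $\mathbb E\|Y\|^4\le 3(\mathbb E\|Y\|^2)^2$ giving $C_4$, and H\"older--Lyapunov with $C_4\ge1$ for the $D^3g$ term. Two minor points. First, the constants come out as exactly $C_i$ only if $\Delta W$ is a standard cylindrical Gaussian on $U$ and $\|D^i\sigma\|_\infty$ is measured with values in $\mathcal L_2(U,H)$, so that $\mathbb E\|T\Delta W\|^2=\|T\|^2_{\mathcal L_2(U,H)}$; the paper assumes this implicitly. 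Second, your pathwise dominated-convergence step needs care: for cylindrical $\Delta W$, the map $\phi\mapsto\sigma(\phi)\Delta W(\omega)$ is defined only almost surely for each fixed $\phi$. It is cleaner to differentiate $\phi\mapsto\tilde\varphi_1(\phi)$ as a map into $L^p(\Omega;H)$, using that $T\mapsto T\Delta W$ is linear and bounded from $\mathcal L_2(U,H)$ into every $L^p(\Omega;H)$.

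The loose end you flagged, the factor $3$ in the mixed term of $D^3u$, is a genuine gap, and neither of your proposed remedies closes it. Symmetrization cannot help. For symmetric multilinear forms on a real Hilbert space, the norm already equals the supremum over the diagonal, so there is no polarization loss to trade against. On the diagonal the mixed term is exactly $3\,\mathbb E\,D^2g(\Psi)\bigl(D\Psi h,D^2\Psi(h,h)\bigr)$. Off the diagonal it is three distinct terms, each of which the argument controls by $\sqrt{C_1C_2}\,\|D^2g\|_\infty$, and nothing in the argument produces coefficient $1$ for their sum. Absorbing the $3$ is not available either, since the lemma fixes the coefficient.

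What your argument proves is
\[
\|D^3u\|_\infty\le C_4\|D^3g\|_\infty+3\sqrt{C_1C_2}\,\|D^2g\|_\infty+\sqrt{C_3}\,\|Dg\|_\infty .
\]
The paper's own proof gives the same bound: its displayed formula for $D^3u$ contains the same three $D^2g$ terms before the details are omitted. So the coefficient $\sqrt{C_1C_2}$ in the statement appears to be a slip for $3\sqrt{C_1C_2}$. This only rescales the constants $A_i$ in \Cref{cor_gn3bound} and leaves the $O(\eta^2)$ conclusion of \Cref{thm_weak_conv} intact. You should state and prove the bound with the $3$ rather than leave it open.
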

\begin{proof}
    We split the proof into two parts.\\
    
    \noindent {\em Step 1.} Here, we estimate the upper bound for the norm of Fr\'echet derivatives of the function $\tilde\varphi_1(\cdot,0)$ in the expectation sense:
    \begin{align*}
        \mathbb E\,\|D^i \tilde\varphi_1(\phi)(h)\|^2\quad\mbox{and}\quad\mathbb E\,\Big(\|D\tilde\varphi_1(\phi)h\|\|D\tilde\varphi_1(\phi)k\|\|D\tilde\varphi_1(\phi)l\|\Big).
    \end{align*}
    Theses are required to obtain the upper bound of $u^{(i)}$ in {\em Step 2}. One can apply the chain rule to $\tilde\varphi_1$ to see, according to definition~\eqref{deri_Banach}:
    \begin{align*}
        &D \tilde\varphi_1(\phi)h=h+\eta Db(\phi)h+\eta\,D\sigma(\phi)h\,(\Delta W)\\[10pt]
        &D^2 \tilde\varphi_1(\phi)(h,k)=\eta D^2b(\phi)(h,k)+\eta D^2\sigma(\phi)(h,k)\,(\Delta W),\\[10pt]
        &D^3 \tilde\varphi_1(\phi)(h,k,l)=\eta D^3b(\phi)(h,k,l)+\eta D^3\sigma(\phi)(h,k,l),
    \end{align*}
    for all $h,k,l\in H$. Due to the regularity of $b$ and $\sigma$, we obtain
    \[\mathbb E\|D^i\tilde\varphi_1(\phi)(h_1,\cdots,h_i)\|^2\le C_i(b,\sigma,\eta)\|h_1\|^2\cdots\|h_i\|^2,\quad\forall~i=1,2,3,\]
    where we call the definition in~\eqref{eqn:def_C_i}.
    
    On the other hand, denote
    \begin{align*}
        D\tilde\varphi_1(\phi)h=:U_h+V_h(\Delta W),
    \end{align*}
    which satisfies
    \begin{align}\label{UV}
        \|U_h\|\le\|h\|(1+\eta\|Db\|_\infty),\quad\|V_h\|\le\eta\|D\sigma\|_\infty\|h\|.
    \end{align}
    Hence, one has
    \begin{align}\label{E4}
        \begin{aligned}
            \mathbb E\|D\tilde\varphi_1(\phi)h\|^4&=\mathbb E\Big(\|U_h\|^2+2\langle U_h,V_h(\Delta W)\rangle+\|V_h(\Delta W)\|^2\Big)^2\\[8pt]
            &\le\|U_h\|^4+6\|U_h\|^2\mathbb E\|V_h(\Delta W)\|^2+\mathbb E\|V_h(\Delta W)\|^4\\[8pt]
            &\le\|U_h\|^4+6\|U_h\|^2\|V_h\|^2+3\|V_h\|^4.
        \end{aligned}
    \end{align}
    By combining \eqref{UV} and \eqref{E4}, one gets
    \begin{align}\label{E4h}
        \mathbb E\|D\tilde\varphi_1(\phi)h\|^4\le C_4(b,\sigma,\eta)\|h\|^4,
    \end{align}
    where $C_4(b,\sigma,\eta)$ is defined in \eqref{eqn:def_C_i}.
    
    Now, we use H\"older's inequality, the Lyapunov inequality and the relation \eqref{E4h} to obtain
    \begin{align*}
        & \mathbb E\,\Big(\|D\tilde\varphi_1(\phi)h\|\|D\tilde\varphi_1(\phi)k\|\|D\tilde\varphi_1(\phi)l\|\Big)\\[8pt]
        &\hspace{1cm}\le\left(\mathbb E\|D\tilde\varphi_1(\phi)h\|^3\right)^{1/3}\left(\mathbb E\|D\tilde\varphi_1(\phi)k\|^3\right)^{1/3}\left(\mathbb E\|D\tilde\varphi_1(\phi)l\|^3\right)^{1/3}\\[8pt]
        &\hspace{1cm}\le\left(\mathbb E\|D\tilde\varphi_1(\phi)h\|^4\right)^{1/4}\left(\mathbb E\|D\tilde\varphi_1(\phi)k\|^4\right)^{1/4}\left(\mathbb E\|D\tilde\varphi_1(\phi)l\|^4\right)^{1/4}\le C_4(b,\sigma,\eta)\|h\|\|k\|\|l\|.
    \end{align*}
    
    \noindent {\em Step 2.} Now, we compute the upper bound of Fr\'echet derivatives of $u$. Using the chain rule, one gets
    \begin{align*}
        |Du(\phi)h|&=\left|\mathbb E\Big(Dg(\tilde\varphi_1(\phi))D\tilde\varphi_1(\phi)h\Big)\right|\le\mathbb E\Big(\|Dg(\tilde\varphi_1(\phi))\|\|D\tilde\varphi_1(\phi)h\|\Big)\\[8pt]
        &\le\sqrt{C_1}\|Dg\|_\infty\|h\|\,,
    \end{align*}
    where we use the boundedness of $Dg$ and the result in {\em Step 1}. The estimate for the second order derivative is similar as follows
    \begin{align*}
        D^2 u(\phi)(h,k)&=\mathbb E\Big(D^2g(\tilde\varphi_1(\phi))\big(D\tilde\varphi_1(\phi)h,D\tilde\varphi_1(\phi)k\big)+Dg(\tilde\varphi_1(\phi)) D^2\tilde\varphi_1(\phi)(h,k)\Big)\\[8pt]
        &\le\|D^2g\|_\infty\mathbb E\Big(\|D\tilde\varphi_1(\phi)h\|\|D\tilde\varphi_1(\phi)k\|\Big)+\|Dg\|_\infty\mathbb E \|D^2\tilde\varphi_1(\phi)(h,k)\|\\[8pt]
        &\le C_1(b,\sigma,\eta)\|D^2g\|_\infty\|h\|\|k\|+C_2(b,\sigma,\eta)\|Dg\|_\infty\|h\|\|k\|,
    \end{align*}
    where the last inequality from the H\"older's inequality and the result in {\em Step 1}. Lastly, the third order derivative can be estimated by the similar method as first and second order derivative. For readers' convenience, we provide the explicit form of the third derivative
    \begin{align*}
        D^3u(\phi) (h,k,l)&=\mathbb E\Big(D^3 g(\tilde\varphi_1(\phi))\left(D\tilde\varphi_1(\phi)h,D\tilde\varphi_1(\phi)k,D\tilde\varphi_1(\phi)l\right)\\[8pt]
        &\hspace{.4cm}+D^2g(\tilde\varphi_1(\phi))\big(D^2\tilde\varphi_1(\phi)(h,k),D\tilde\varphi_1(\phi)l\big)+D^2g(\tilde\varphi_1(\phi))\big(D^2\tilde\varphi_1(\phi)(h,l),D\tilde\varphi_1(\phi)k\big)\\[8pt]
        &\hspace{.4cm}+D^2g(\tilde\varphi_1(\phi))\big(D^2\tilde\varphi_1(\phi)(k,l),D\tilde\varphi_1(\phi)h\big)+Dg(\tilde\varphi_1(\phi))D^3\tilde\varphi_1(\phi)(h,k,l)\Big).
    \end{align*}
    We omit the detail and complete the proof.
\end{proof}

An immediate corollary follows. Through induction, we let
\begin{align*}
    g_{n+1} = A(g_n), \quad  n \ge 0\,.
\end{align*}
Then clearly all $g_n\in\mathcal{G}$, and we summarize an explicit bound.
\begin{corollary}\label{cor_gn3bound}
    Suppose that $g_n\in\mathcal G$, for $n\ge1$, are generated by $A$ starting from $g_0\in\mathcal G$. Then, we have the upper bound of $\|D^3g_n\|_\infty$ as follows
    \begin{align*}
        \|D^3g_n\|_\infty\le A_1C_4^n+A_2C_1^n+A_3C_1^{n/2},
    \end{align*}
    where $C_i=C_i(b,\sigma,\eta)$ were defined in~\eqref{eqn:def_C_i} and $A_i=A_i(g,b,\sigma,\eta)$ are:
    \begin{align*}
        A_1(g,b,\sigma,\eta)&=\|D^3g_0\|_\infty-A_2-A_3,\\[8pt]
        A_2(g,b,\sigma,\eta)&=\left(\|D^2g_0\|_\infty+\frac{\|Dg_0\|_\infty C_2^{1/2}}{C_1-C_1^{1/2}}\right)\frac{(C_1C_2)^{1/2}}{C_1-C_4},\\[8pt]
        A_3(g,b,\sigma,\eta)&=\frac{1}{C_1^{1/2}-C_4}\left(\frac{\|Dg_0\|_\infty C_2^{1/2}}{C_1^{1/2}-C_1}(C_1C_2)^{1/2}+\|Dg_0\|_\infty C_3^{1/2}\right)\,.
    \end{align*}
\end{corollary}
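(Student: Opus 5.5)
The plan is to turn the three estimates of \Cref{lem_ug} into a triangular system of linear recursions for the three sequences
\[
x_n:=\|Dg_n\|_\infty,\qquad y_n:=\|D^2g_n\|_\infty,\qquad z_n:=\|D^3g_n\|_\infty .
\]
Applying \Cref{lem_ug} with $g=g_n$ and $u=g_{n+1}=A(g_n)$ gives
\[
x_{n+1}\le C_1^{1/2}x_n,\qquad y_{n+1}\le C_1y_n+C_2^{1/2}x_n,\qquad z_{n+1}\le C_4z_n+(C_1C_2)^{1/2}y_n+C_3^{1/2}x_n .
\]
Every coefficient is nonnegative, so each inequality can be iterated while keeping its direction. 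Throughout I will assume that $C_1$, $C_1^{1/2}$ and $C_4$ are pairwise distinct, so that the denominators in $A_2$ and $A_3$ make sense. The degenerate cases would produce factors $nC^{n}$, and I would only remark on them.

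First I solve the recursions from the top down. The first one gives directly $x_n\le C_1^{n/2}x_0$. Substituting this into the second and iterating gives
\[
y_n\le C_1^ny_0+C_2^{1/2}x_0\sum_{j=0}^{n-1}C_1^{n-1-j}C_1^{j/2}.
\]
I then sum this geometric sum in closed form, writing the bound as $\alpha C_1^n+\beta C_1^{n/2}$ with
\[
\alpha=y_0+\frac{x_0C_2^{1/2}}{C_1-C_1^{1/2}},\qquad \beta=\frac{x_0C_2^{1/2}}{C_1^{1/2}-C_1}.
\]
This $\alpha$ is exactly the bracket appearing in $A_2$, and $\beta$ is the factor appearing in $A_3$, which confirms that this is the intended route.

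Next I insert both bounds into the third recursion. This yields
\[
z_n\le C_4^nz_0+\sum_{j=0}^{n-1}C_4^{n-1-j}\Bigl[(C_1C_2)^{1/2}\bigl(\alpha C_1^j+\beta C_1^{j/2}\bigr)+C_3^{1/2}x_0C_1^{j/2}\Bigr].
\]
I evaluate each geometric sum with $\sum_{j}C_4^{n-1-j}r^j=(r^n-C_4^n)/(r-C_4)$, taking $r=C_1$ in one and $r=C_1^{1/2}$ in the other. The coefficient of $C_1^n$ then comes out as $\alpha(C_1C_2)^{1/2}/(C_1-C_4)=A_2$. The coefficient of $C_1^{n/2}$ comes out as $\bigl(\beta(C_1C_2)^{1/2}+C_3^{1/2}x_0\bigr)/(C_1^{1/2}-C_4)=A_3$. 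The subtracted $C_4^n$ pieces combine with $z_0C_4^n$ into $(z_0-A_2-A_3)C_4^n=A_1C_4^n$. This is precisely the stated form.

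The main obstacle is sign bookkeeping rather than any new idea. Some of the partial-fraction coefficients, for example $\beta$, or $A_2$ when $C_4>C_1$, may be negative, so I must be careful about where inequalities are used. My plan is to keep every step an inequality only at the level of the nonnegative recursions, and to write the iterated sums as exact identities before applying the closed-form geometric formulas. Because the geometric sums are evaluated exactly, no bound ever multiplies an inequality by a negative number, and so the final expression is a valid upper bound regardless of the signs of $A_1$, $A_2$, $A_3$.
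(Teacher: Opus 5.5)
Your proposal is correct and follows the paper's argument: the paper also turns \Cref{lem_ug} into the triangular recursions and simply asserts that solving them successively gives the bound, so your explicit geometric-sum computation (which reproduces $A_1$, $A_2$, $A_3$ exactly) and your sign bookkeeping just fill in the omitted details. Your distinctness assumption holds automatically in the nondegenerate case, since $C_4 - C_1^2 = 4\eta^2(1+\eta\|Db\|_\infty)^2\|D\sigma\|_\infty^2 + 2\eta^4\|D\sigma\|_\infty^4 \ge 0$ and therefore $C_4 \ge C_1^2 > C_1 > C_1^{1/2}$ whenever $C_1>1$.
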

\begin{proof}
Set
\[
z_n^{(1)}:=\|Dg_n\|_\infty,\qquad
z_n^{(2)}:=\|D^2g_n\|_\infty,\qquad
z_n^{(3)}:=\|D^3g_n\|_\infty .
\]
By \Cref{lem_ug}, these quantities satisfy
\begin{align*}
    z_{n+1}^{(1)}&\le \sqrt{C_1}\,z_n^{(1)},\\
    z_{n+1}^{(2)}&\le C_1 z_n^{(2)}+\sqrt{C_2}\,z_n^{(1)},\\
    z_{n+1}^{(3)}&\le C_4 z_n^{(3)}+\sqrt{C_1C_2}\,z_n^{(2)}+\sqrt{C_3}\,z_n^{(1)}.
\end{align*}
Solving these recursions successively yields the stated bound.
\end{proof}


\subsection{Global Convergence Rate in Infinite Dimensions}\label{sec:global_proof}
With all pieces ready, we can assemble~\Cref{lem_one_step} and~\Cref{cor_gn3bound} to finalize the total error analysis. To do so, we first simplify our notations slightly:
\begin{align}\label{bar_A_def}
    |A_i(\eta)|\le\bar{A_i}(\eta),\quad\forall~i=1,2,3.
\end{align}
The explicit formula for $\bar{A_i}$ will be provided in \Cref{form_Ai}. In addition, we set simple notation:
\begin{align*}
    J_i:=\left(\|D^ib\|_\infty^2+\|D^i\sigma\|_\infty^2\right)^{1/2},\quad\forall~i=1,2,3.
\end{align*}
The final theorem we have is the following:


\begin{theorem}[Error Estimate]\label{thm_weak_conv}
    Suppose that $\phi_n$ and $\tilde\varphi_n$ are stochastic iterative processes starting from a fixed initial state $\phi_0\in H$ generated by the first and second stochastic iterations in \eqref{iter_disc_a}-\eqref{iter_disc_b}, respectively. Assume that $b$ and $\sigma$ have  bounded derivatives up to the third order and satisfy
    \begin{align*}
        \sup_{\phi\in H}\mathbb E\Big(\|\sigma(\phi)(\Delta W)\|^3+\|V(\phi)\|^3\Big)=:\mathcal D<\infty.
    \end{align*}
    Then, for each $\bar\eta\ge0$, there exists a constant $\mathcal C_{g,b,\sigma}(\bar\eta)$ such that
    \begin{align*}
        \max_{k=0,\cdots,\lfloor T/\eta\rfloor}\left|\mathbb E\big(g(\tilde\varphi_k)-g(\phi_k)\big)\right|\le\eta^2\mathcal D\mathcal C_{g,b,\sigma}(\bar\eta),\quad\forall~\eta\in[0,\bar\eta].
    \end{align*}
    The function $\bar\eta\mapsto\mathcal C_{g,b,\sigma}(\bar\eta)$ is defined as follows:
    \begin{align*}
        \mathcal C_{g,b,\sigma}(\bar\eta):=\left(\bar{A_1}(\bar\eta)\frac{\exp\left(T\left(4\|Db\|_\infty+6\bar\eta J_1^2\right)\right)-1}{4\|Db\|_\infty}+\bar{A_2}(\bar\eta)\frac{\exp\left(T\left(2\|Db\|_\infty+\bar\eta J_1^2\right)\right)-1}{2\|Db\|_\infty}\right.\\
        \left.+\bar{A_3}(\bar\eta)\frac{\exp\left(T\left(2\|Db\|_\infty+\bar\eta J_1^2\right)/2\right)-1}{\|Db\|_\infty}\right).
    \end{align*}
\end{theorem}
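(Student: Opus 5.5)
The plan is to start from the telescoping identity \eqref{eqn:induction}, which writes $\mathbb E g(\tilde\varphi_k)-\mathbb E g(\phi_k)=\sum_{l=1}^{k}G_l$. Each $G_l$ compares one step of the two dynamics started from the common (random) point $\phi_{l-1}$. The test functional in that comparison is $\mathbb E\,g(\tilde\varphi_k(\cdot,l))$, i.e.\ $g$ pushed through the remaining $k-l$ Euler--Maruyama steps.

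The first step is to identify this pushed functional. By the Markov property of \eqref{iter_disc_b} and independence of the Brownian increments, it equals $g_{k-l}:=A^{k-l}(g)$, with $A$ as in \eqref{oper_A}. Conditioning on $\mathfrak F_{l-1}$ (so that $\phi_{l-1}$ is frozen, while $\gamma_{l-1}$ and the $l$-th increment $\Delta W$ are independent of it), we get
\[
G_l=\mathbb E\Big[\,\mathbb E\big(g_{k-l}(\tilde\varphi_l(\phi_{l-1},l-1))-g_{k-l}(\phi_l(\phi_{l-1},l-1))\,\big|\,\mathfrak F_{l-1}\big)\Big].
\]
Since $g_{k-l}\in\mathcal G$ by \Cref{lem_ug}, \Cref{lem_one_step} applies inside the conditional expectation with $\phi=\phi_{l-1}$. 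Taking the supremum over $\phi$ in the third-moment quantity gives
\[
|G_l|\le \eta^3\,\mathcal D\,\|D^3g_{k-l}\|_\infty .
\]

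The second step is to bound $\|D^3 g_{k-l}\|_\infty$ using \Cref{cor_gn3bound}: $\|D^3g_{n}\|_\infty\le \bar A_1C_4^{n}+\bar A_2C_1^{n}+\bar A_3C_1^{n/2}$ with $n=k-l$. Summing over $l$ gives geometric sums:
\[
\sum_{n=0}^{k-1}C^n=\frac{C^k-1}{C-1},\qquad C\in\{C_4,\,C_1,\,C_1^{1/2}\}.
\]
Each constant is then expanded in $\eta$. From \eqref{eqn:def_C_i},
\[
C_1=(1+\eta\|Db\|_\infty)^2+\eta^2\|D\sigma\|^2_\infty\le 1+\eta\big(2\|Db\|_\infty+\eta J_1^2\big),
\]
and similarly $C_4\le 1+\eta(4\|Db\|_\infty+6\eta J_1^2)$ after absorbing higher powers using $\eta\le\bar\eta$. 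Hence $C^k\le \exp\big(k\eta(\cdots)\big)\le \exp\big(T(\cdots)\big)$ because $k\eta\le T$, with $\eta$ inside the brackets replaced by $\bar\eta$. For the denominators, $C-1\ge \eta\cdot(\text{the }\|Db\|_\infty\text{-coefficient})$, e.g.\ $C_1-1\ge 2\eta\|Db\|_\infty$ and $C_1^{1/2}-1\ge\eta\|Db\|_\infty$. One factor of $\eta$ is lost in the denominator, which turns $\eta^3$ into $\eta^2$ and produces exactly the three terms of $\mathcal C_{g,b,\sigma}(\bar\eta)$. Finally the maximum is taken over $k\le\lfloor T/\eta\rfloor$, using that every bound is monotone in $k$.

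I expect the main obstacle to be the bookkeeping in this last step. The difficulty is getting two-sided control of $C-1$: an upper bound for the exponential and a lower bound for the denominator. This is delicate because $\|Db\|_\infty$ could vanish or because $b$ need not be monotone. My first approach will be the crude inequalities $C-1\ge c\,\eta\|Db\|_\infty$ and $1+x\le e^x$, and I will check them against the stated form. If the denominators fail in degenerate cases, I will instead use $\frac{C^k-1}{C-1}\le k\,C^{k}$, giving a bound $T e^{(\cdots)T}/\eta$ that is uniform in $\|Db\|_\infty$. A secondary point is the uniformity of $\bar A_i(\eta)$ for $\eta\in[0,\bar\eta]$. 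The expressions in \Cref{cor_gn3bound} contain differences like $C_1-C_4$ in denominators, which is why the theorem refers to the majorants $\bar A_i$ of \eqref{bar_A_def} and takes those as given.
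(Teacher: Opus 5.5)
Your proposal is correct and follows the paper's proof essentially step for step: the telescoping identity, identifying the propagated test functional as $A^{k-l}g$, applying the one-step lemma conditionally on $\phi_{l-1}$, bounding $\|D^3g_m\|_\infty$ via the corollary, and summing the geometric series using $1+x\le e^x$ and $C-1\ge c\,\eta\|Db\|_\infty$. One small correction: the linear bound $C_4\le 1+\eta(4\|Db\|_\infty+6\eta J_1^2)$ is false, since the positive terms $4\eta^3\|Db\|_\infty^3+\eta^4\|Db\|_\infty^4+\dots$ cannot be absorbed into that exact constant; what you actually need holds directly, namely $C_4\le (1+\eta\|Db\|_\infty)^4\,(1+6\eta^2\|D\sigma\|_\infty^2+3\eta^4\|D\sigma\|_\infty^4)\le \exp\big(\eta(4\|Db\|_\infty+6\bar\eta J_1^2)\big)$.
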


\begin{proof}
    By induction~\eqref{eqn:induction}, we have the definition of $G_l$ and we write them in a conditional form:
    \begin{equation}
        \begin{cases}
            G_k=\mathbb E\big(\mathbb E\big(g\big(\tilde\varphi_k(\phi_{k-1},k-1)\big)\big|\phi_{k-1}\big)\big)-\mathbb E\big(\mathbb E\big(g\big(\phi_k(\phi_{k-1},k-1)\big)\big|\phi_{k-1}\big)\big)\\
            G_l = \mathbb E\Big(\mathbb E\big(\mathbb E\big(g\big(\tilde\varphi_k(\tilde\varphi_l)-g\big(\tilde\varphi_k(\phi_l)\big)\big|\tilde\varphi_l(\phi_{l-1},l-1),\phi_l(\phi_{l-1},l-1)\big)\big|\phi_{l-1}\big)\Big)
        \end{cases}
    \end{equation}

We control these two terms separately.

    \noindent {\em Estimate of $G_k$.} Apply \Cref{lem_one_step}, we have
    \begin{align}\label{G_k_bound}
        \begin{aligned}
        G_k&=\mathbb E\big(\mathbb E\big(g\big(\tilde\varphi_k(\phi_{k-1},k-1)\big)\big|\phi_{k-1}\big)\big)-\mathbb E\big(\mathbb E\big(g\big(\phi_k(\phi_{k-1},k-1)\big)\big|\phi_{k-1}\big)\big)\\[8pt]
        &\le\mathbb E\big(\big|\mathbb E\big(g\big(\tilde\varphi_k(\phi_{k-1},k-1)\big)-g\big(\phi_k(\phi_{k-1},k-1)\big)\big|\phi_{k-1}\big)\big|\big)\\[8pt]
        &\le\eta^3\|D^3 g\|_\infty\mathbb E\big(\|\sigma(\phi_{k-1})(\Delta W)\|^3+\|V(\phi_{k-1})\|^3\big)\le\eta^3\|D^3g\|_\infty\mathcal D.
        \end{aligned}
    \end{align}

    \noindent {\em Estimate of $G_l$.} The value $G_l$ represents the comparison of the two states propagated to step $k$ starting from the slightly different states $\phi_l$ and $\tilde\varphi_l$ at step $l$, which are generated from the common previous state $\phi_{l-1}$ via the stochastic iterations in \eqref{iter_disc_a}--\eqref{iter_disc_b}.
    In this construction, by \Cref{lem_ug}, we can inductively generate new test functions $g_n$ for $n=1,\cdots,k-l$ via the operator $A$ defined in \eqref{oper_A}, thereby reducing the propagation and obtaining the one-step error representation. Under the conditional expectation with $\phi_{l-1}$ is fixed, we take $g_0=g$ and express the subsequent relation accordingly
    \begin{align*}
        &\mathbb E\Big(g_0\big(\tilde\varphi_k(\tilde\varphi_l)-g_0\big(\tilde\varphi_k(\phi_l)\big)\big|\tilde\varphi_l(\phi_{l-1},l-1),\phi_l(\phi_{l-1},l-1)\Big)\\[8pt]
        &\hspace{.5cm}=\mathbb E\Big(\mathbb E\big(g_0\big(\tilde\varphi_k(\tilde\varphi_{k-1}(\tilde\varphi_l,l))-g_0\big(\tilde\varphi_k(\tilde\varphi_{k-1}(\phi_l,l))\big)\big|\tilde\varphi_{k-1}(\tilde\varphi_l,l),\tilde\varphi_{k-1}(\phi_l,l)\big)\big|\tilde\varphi_l(\phi_{l-1},l-1),\phi_l(\phi_{l-1},l-1)\Big)\\[8pt]
        &\hspace{.5cm}=\mathbb E\Big(g_1\big(\tilde\varphi_{k-1}(\tilde\varphi_l,l)\big)-g_1\big(\tilde\varphi_{k-1}(\phi_l,l)\big)\big|\tilde\varphi_l(\phi_{l-1},l-1),\phi_l(\phi_{l-1},l-1)\Big)\\[8pt]
        &\hspace{.5cm}=\cdots=\mathbb E\Big(g_{k-l}\big(\tilde\varphi_l(\tilde\varphi_l,l)\big)-g_{k-l}\big(\tilde\varphi_l(\phi_l,l)\Big)\big|\tilde\varphi_l(\phi_{l-1},l-1),\phi_l(\phi_{l-1},l-1)\Big)\\[8pt]
        &\hspace{.5cm}=\mathbb E\Big(g_{k-l}(\tilde\varphi_l)-g_{k-l}(\phi_l)\big|\tilde\varphi_l(\phi_{l-1},l-1),\phi_l(\phi_{l-1},l-1)\Big).
    \end{align*}
    Then, we again use the one-step error estimate in \Cref{lem_one_step} to get
    \begin{align}\label{G_l_bound}
        G_l&\le\eta^3\|D^3g_{k-l}\|_\infty\mathbb E\Big(\mathbb E\big(\|\sigma(\phi_{l-1})(\Delta W)\|^3+\|V(\phi_{l-1})\|^3\big|\phi_{l-1}\big)\Big)\le\eta^3\|D^3g_{k-l}\|_\infty\mathcal D.
    \end{align}

    \noindent Finally, we combine \eqref{eqn:induction}, \eqref{G_k_bound} and \eqref{G_l_bound} to deduce
    \begin{align}\label{upp_bdd}
        \begin{aligned}
            \left|\mathbb E\,g(\tilde\varphi_k)-\mathbb E\,g(\phi_k)\right|&\le
            \eta^3\mathcal D
            \left(
            \|D^3 g_0\|_\infty+\sum_{l=1}^{k-1}\|D^3 g_{k-l}\|_\infty
            \right)=\eta^3\mathcal D\sum_{m=0}^{k-1}\|D^3g_m\|_\infty\\
            &\le\eta^3\mathcal D\left(A_1\frac{C_4^k-1}{C_4-1}+A_2\frac{C_1^k-1}{C_1-1}+A_3\frac{C_1^{k/2}-1}{C_1^{1/2}-1}\right),
        \end{aligned}
    \end{align}
    where we use \Cref{cor_gn3bound} in the last inequality. Note that this upper bound increases as $k$ grows. Therefore, it suffices to show that the upper bound is $O(\eta^2)$ when $k=\lfloor T/\eta\rfloor$.
    
    By the definition of $\bar{A_i}$ in \eqref{bar_A_def}, we may write
    \begin{align*}
        \mbox{RHS of \eqref{upp_bdd}}\le\eta^3\mathcal D\left(\bar{A_1}(\eta)\frac{C_4^{T/\eta}-1}{C_4-1}+\bar{A_2}(\eta)\frac{C_1^{T/\eta}-1}{C_1-1}+\bar{A_3}(\eta)\frac{C_1^{T/2\eta}-1}{C_1^{1/2}-1}\right).
    \end{align*}
    We use $1+x\le e^x$ and $C_4-1\ge 4\eta\|Db\|_\infty$ to get
    \begin{align*}
        \eta\frac{C_4^{T/\eta}-1}{C_4-1}\le\frac{\exp\left(T\left(4\|Db\|_\infty+6\bar\eta J_1^2\right)\right)-1}{4\|Db\|_\infty},\quad\forall~\eta\in[0,\bar\eta].
    \end{align*}
    By the similar way, one can derive the desired result, which ends the proof.
\end{proof}
\begin{remark}
    In \Cref{thm_weak_conv}, we assume that the derivatives up to 3rd of the drift $b$ and the diffusion $\sigma$ are uniformly bounded. This assumption is inherently tied to our discrete telescoping argument, as it prevents the degree of the test functions from exploding during the iterative compositions. However, thanks to the uniform moment bounds established in estimate \eqref{wp5}, it is theoretically plausible to relax this condition to allow for polynomial growth. Such an extension would likely require shifting from the current discrete framework to continuous-time stochastic analysis tools, such as the Kolmogorov Backward Equation, which we leave as a direction for future work.
\end{remark}

\section{Numerical Experiments}\label{sec:numerics}
In this section we present numerical experiments that validate the theoretical prediction in \Cref{thm_weak_conv}, which establishes an $O(\eta^2)$ weak convergence rate between the SGD iterates and the discretized stochastic modified equation (SME).

We begin in \Cref{subsec_homo} with a quadratic optimization problem subject to homogeneous noise. This simplified setting allows us to clearly compare three dynamics: the SGD iterates, the continuous-time SDE limit (SME), and its time-discretized counterpart. We then turn in \Cref{subsec_inhomo} to a nonlinear inverse problem with inhomogeneous (state-dependent) noise.

In practice, three sources contribute to the numerical error:
\begin{itemize}
\item \textbf{Finite-dimensional approximation error.} Functions in the Hilbert space $H$ require infinitely many coefficients for an exact representation and therefore cannot be simulated directly. In practice we represent functions using a truncated basis expansion. Let $H=\text{span}\{e_i\}_{i=1}^\infty$ denote the underlying basis and define the truncated space $H_D=\text{span}\{e_i\}_{i=1}^D$, where $D$ is the number of basis functions used in the simulation.

\item \textbf{Monte Carlo sampling error.}
The weak error \eqref{eq:weak_num} is defined in expectation. In numerical experiments each realization produces a different outcome. To approximate the expectation, we run both the SGD and the discrete-SME dynamics independently $N$ times and estimate the error by comparing the empirical means.

\item \textbf{Discretization error.}
According to \Cref{thm_weak_conv}, the weak discretization error scales as $O(\eta^2)$.

\item {\textbf{Choice of diffusion factorization in the numerical scheme.}
In \Cref{subsec_sme}, the diffusion operator $\sigma$ is constructed in the form $\sigma:L_\gamma^2\to H$ in order to ensure well-posedness of the SME. For the numerical implementation, however, we generate the Gaussian noise in the discrete-SME scheme through a square-root (e.g., Cholesky) factorization of the projected covariance operator. This does not affect the discrete-time approximation, since at the numerical level the distribution of the Gaussian increment, and hence the weak error behavior, is determined only by the covariance, not by the particular factorization used to realize it.
}
\end{itemize}

The primary objective of the experiments is to verify this discretization error. To isolate this effect, the influence of the truncation error and the Monte Carlo error must be kept sufficiently small. In practice, we choose $N$ large enough to reduce the sampling error, and we report convergence results for several values of the truncation dimension $D$. We claim the second order discretization error is captured if the convergence rate is observed across all values of $D$ before saturating (when MC error dominates). Details of these choices are provided in the two examples below.

\subsection{Example 1: Homogeneous Noise (Quadratic Example)}\label{subsec_homo}

We consider a model problem where the Hilbert space ${H}$ is endowed with an orthonormal basis $\{e_{i}\}_{i=1}^{\infty}$ and the objective function $\mathcal{F}$ is quadratic. Specifically, let
\begin{align*}
    \mathcal{F}(\phi) := \mathbb{E}_{\gamma}[\mathcal{F}_\gamma(\phi)], \quad \text{with} \quad \mathcal{F}_\gamma(\phi) := \langle \phi - \gamma, \Lambda(\phi - \gamma) \rangle_{{H}}, \quad \forall \gamma \in {H},
\end{align*}
where $\Lambda \in \mathcal{L}^+({H})$. Assuming the random variable $\gamma$ satisfies $\mathbb{E}[\gamma] = 0$ and the covariance operator $\mathbb{E}[\gamma \otimes \gamma] = Q$, the objective function can be written as $\mathcal{F}(\phi) = \langle \phi, \Lambda \phi \rangle + \Tr(\Lambda Q)$.

The SGD iteration is given by
\begin{align}\label{sgd_homo}
\phi_{n+1} = \phi_n - \eta \nabla \mathcal{F}(\phi_n) + \eta \bigl( \nabla \mathcal{F}(\phi_n) - \nabla \mathcal{F}_{\gamma_n}(\phi_n) \bigr),
\end{align}
where $\{\gamma_n\}$ are i.i.d. samples. Due to the quadratic structure of $\mathcal{F}_\gamma$, the stochastic noise term simplifies to
\[
\nabla \mathcal{F}(\phi_n) - \nabla \mathcal{F}_{\gamma_n}(\phi_n) = -2\Lambda \gamma_n.
\]

Notably, this noise term is independent of the state $\phi_n$. Consequently, the corresponding SME takes on the form of \eqref{sde_model} with a constant Hilbert-Schmidt diffusion operator $\sigma$ satisfying
\begin{align*}
    \sigma \sigma^* = \text{Var}(2\Lambda \gamma) = 4 \Lambda Q \Lambda^*.
\end{align*}
The discretized version of the SME is
\begin{align}\label{disc_sme_homo}
    \tilde{\varphi}_{n+1} = \tilde{\varphi}_n - \eta \nabla \mathcal{F}(\tilde{\varphi}_{n}) + \eta \sigma \Delta W_{n},
\end{align}
where $\Delta W_n$ are i.i.d. increments of a $Q$-Wiener process.

In our numerical experiments, we use the following configurations:
\begin{align*}
\langle e_i, \Lambda e_j \rangle = (0.8)^{|i-j|}, \quad \langle e_i, Q e_j \rangle = \frac{\delta_{ij}}{i^2}, \quad \text{and} \quad \gamma = \sum_{i=1}^\infty \frac{\zeta_i}{i} e_i,
\end{align*}
where $\zeta_i$ are Bernoulli-type random variables satisfying $\mathbb{P}(\zeta_i = -0.5) = 0.8$ and $\mathbb{P}(\zeta_i = 2) = 0.2$. For the simulation, we set $\phi_0 = 0$ with a terminal time $T = 1.0$, and we measure the weak error via the test function $g(\phi) = \|\phi\|^4$:
\begin{equation}\label{eq:weak_num}
\mathcal{E}_{weak} = \left| \mathbb{E}[g(\phi_{T/\eta})] - \mathbb{E}[g(\tilde{\varphi}_{T/\eta})] \right|.
\end{equation}

Since the objective function is quadratic, the SME admits an analytical solution. The standard Euler--Maruyama scheme predicts a discretization error of order $\mathcal{O}(\eta)$, which we verify numerically in \Cref{fig_homo_analy}. To observe the statistical behavior of the SGD solver, we run $N=5 \times 10^6$ independent trajectories and plot the average.
\begin{figure}[h!]
    \centering
    \begin{overpic}[width=0.6\linewidth]{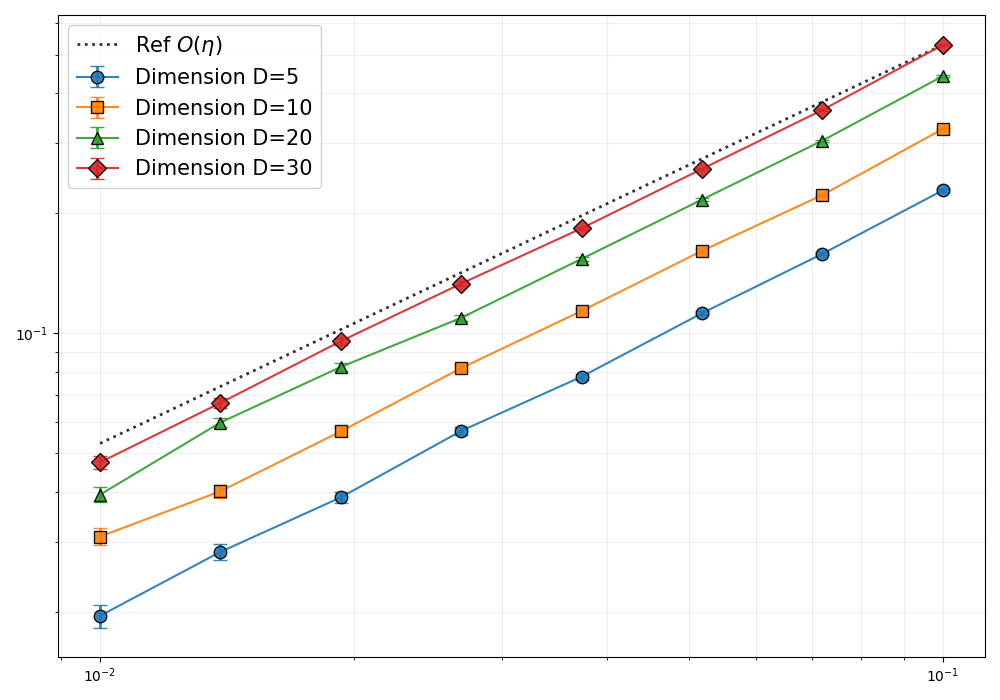}
        \put(44,0){stepsize $\eta$}
        \put(-6,12){\rotatebox{90}{$\Big|\frac{1}{N}\sum_{l=1}^N[g(\phi_{T/\eta}^{(l)})-\mathbb Eg(\varphi_T)\Big|$}}
    \end{overpic}
    \caption{{\textbf{(Example 1, MC SGD vs. SME)} Weak error between the SGD iterate and the exact SME expectation estimated using $N=5\times10^6$ Monte Carlo samples. Results are shown for different truncation dimensions $D$ of the Hilbert space. The dotted line indicates the reference slope $\mathcal{O}(\eta)$.}}
    \label{fig_homo_analy}
\end{figure}

To validate the theory and visualize the convergence rate in $\eta$, we ensure that the impact of the Monte Carlo sampling error is negligible. We examine the number of samples $N$ required for the error to reach a plateau. As shown in \Cref{fig_trials}, for all fixed $\eta$, as $N$ increases, the error initially decays at the standard Monte Carlo rate of $1/\sqrt{N}$ before entering a plateau regime where the discretization error dominates. Based on these results, $N = 5 \times 10^6$ is a sufficiently large sample size, and we employ this value for all subsequent experiments.
\begin{figure}[ht]
    \centering
    \begin{overpic}[width=0.6\linewidth]
    {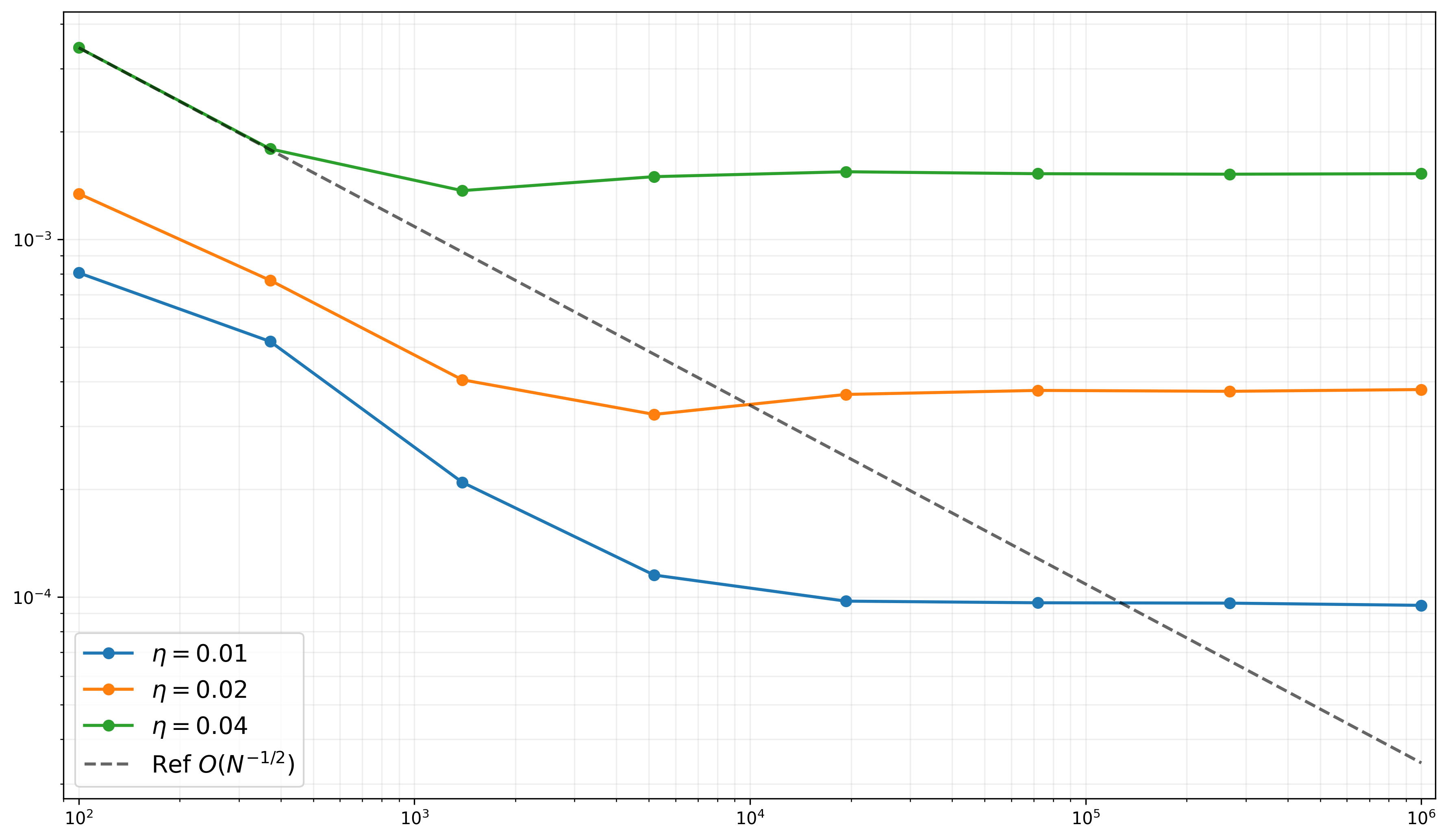}
        \put(27,-3){number of Monte Carlo trials $N$}
        \put(-6,7){\rotatebox{90}{$\Big|\frac{1}{N}\sum_{l=1}^Ng(\tilde\varphi_{T/\eta}^{(l)})-\mathbb Eg(\varphi_T)\Big|$}}
    \end{overpic}
    \caption{{\textbf{(Example 1, MC discretized SME vs. SME)} Convergence of the Monte Carlo estimator with respect to the number of trials $N$ for different stepsizes $\eta$, averaged over $50$ independent runs. For small $N$, the error decreases at the Monte Carlo rate $\mathcal{O}(N^{-1/2})$ till the error saturates and plateaus at a place where discretization error dominates for large $N$. The dashed line indicates the reference slope $\mathcal{O}(N^{-1/2})$. The experiment is performed with truncation dimension $D=30$.}}
    \label{fig_trials}
\end{figure}

In \Cref{fig_homo}, we examine the difference between the SGD iterates \eqref{sgd_homo} and the discretized SME \eqref{disc_sme_homo}. \Cref{fig_homo_log} uses a log-log scale, where the straight reference line illustrates the algebraic decay rate $\mathcal{O}(\eta^2)$ of the weak error, confirming the prediction in \Cref{thm_weak_conv}. In contrast, \Cref{fig_homo_regular} adopts a log-linear scale to highlight the monotonic convergence as $\eta$ decreases. We also include error bars corresponding to the standard deviation of the Monte Carlo estimator; as $\eta$ decreases, these bars shrink, indicating reduced variability. In both plots, it is clear that the convergence rate is second-order across all choices of $D$.
\begin{figure}[h!]
    \centering
    \hspace{.5cm}
    \begin{subfigure}[b]{0.45\textwidth}
        \centering
        \begin{overpic}
            [width=\linewidth]
            {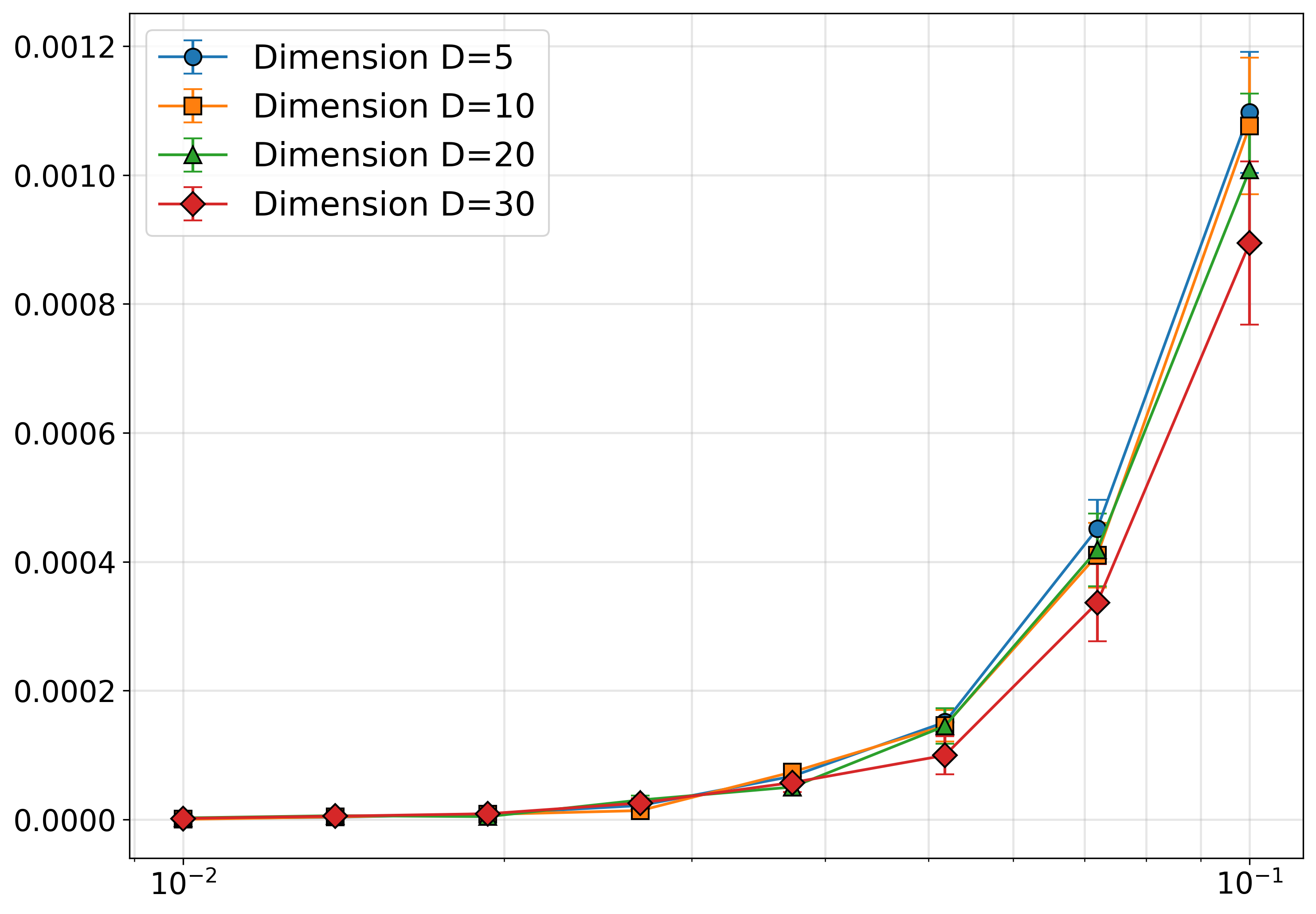}
            \put(43,-1){\footnotesize stepsize $\eta$}
            \put(-8,6){\rotatebox{90}{\footnotesize$\Big|\frac{1}{N}\sum_{l=1}^N\left(g(\phi_{T/\eta}^{(l)})-g(\tilde\varphi_{T/\eta}^{(l)})\right)\Big|$}}
        \end{overpic}
        \caption{log($x$)-regular($y$) scale}
        \label{fig_homo_regular}
    \end{subfigure}
    \hspace{.5cm}
    \begin{subfigure}[b]{0.45\textwidth}
        \centering
        \begin{overpic}
            [width=\linewidth]
            {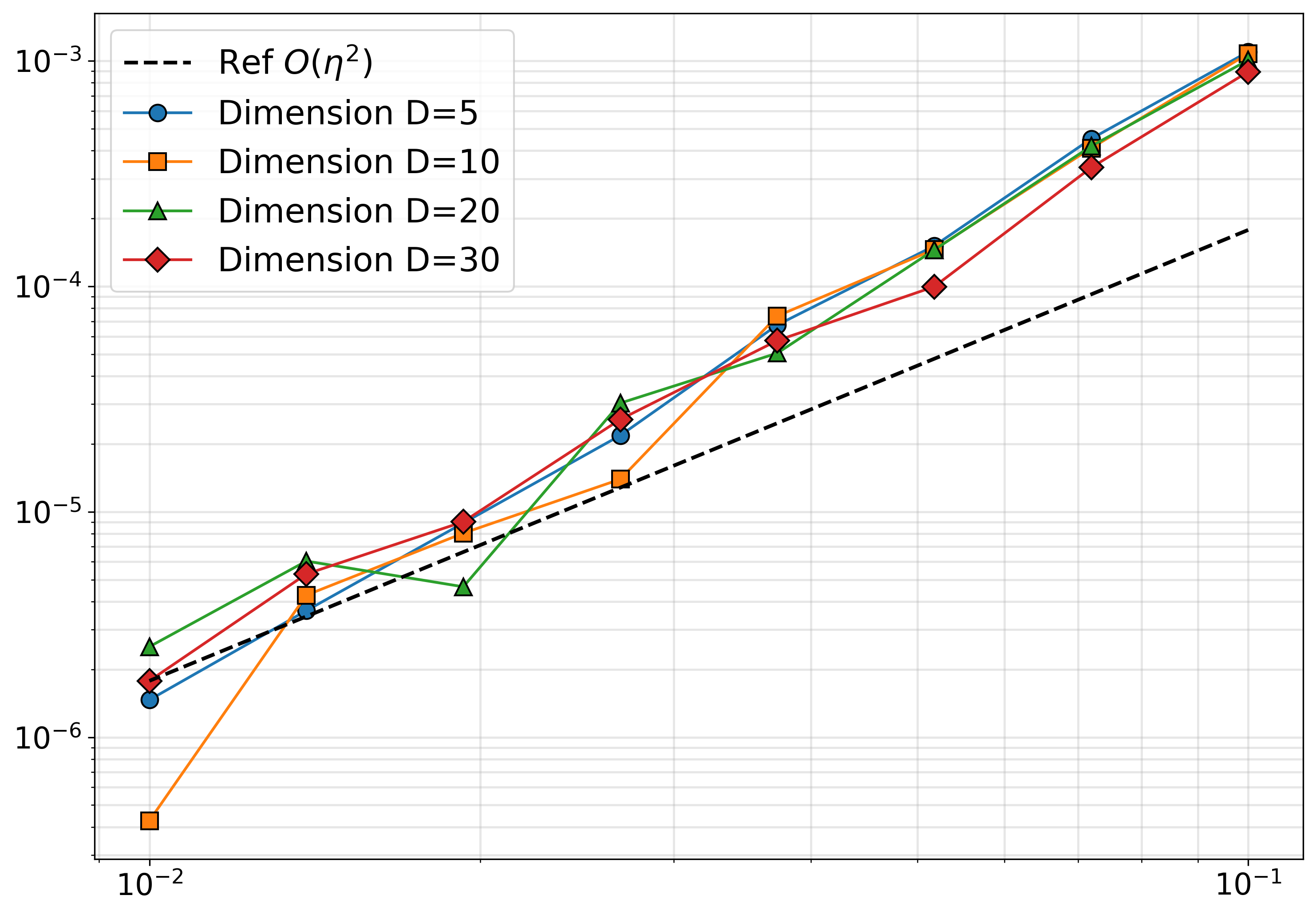}
        \end{overpic}
        \caption{log($x$)-log($y$) scale}
        \label{fig_homo_log}
    \end{subfigure}
    \caption{{\textbf{(Example 1, Weak error in the homogeneous noise Setting)} The plots show the weak error \eqref{eq:weak_num} as a function of the step size $\eta$ for truncation dimensions $D\in\{5,10,20,30\}$. Error bars in the left panel represent the Monte Carlo standard error (MCSE) computed from $5\times10^6$ samples. The dashed line indicates the reference slope $\mathcal O(\eta^2)$.}}
    \label{fig_homo}
\end{figure}

\subsection{Example 2: Inverse Problem}\label{subsec_inhomo}

In this second example, we investigate a setting where the stochastic gradient noise depends explicitly on the current state, leading to a state-dependent diffusion coefficient in the corresponding SME. We consider an inverse problem for an unknown target function $\bar\phi \in H = L^2([0,1]^2)$. The objective functional is defined as
\begin{align*}
    \mathcal{F}(\phi) := \mathbb{E}_{x \sim \mathcal{U}([0,1]^2)}[\mathcal{F}_x(\phi)], \quad \text{with} \quad \mathcal{F}_x(\phi) = \frac{1}{2}\left(\mathcal{A}_x(\phi) - \mathcal{A}_x(\bar\phi)\right)^2,
\end{align*}
where $\mathcal{A}_x$ is a sensing operator parameterized by $x$. This problem originates from the optical ptychography setting~\cite{pfeiffer2018x}. We define $\mathcal{A}_x$ via a Gaussian kernel centered at $x$:
\begin{align*}
    \mathcal{A}_x(\phi) := \int_{[0,1]^2} \mathcal{K}_\varepsilon(x,y)\,\phi(y)\,dy, \quad \text{with} \quad \mathcal{K}_\varepsilon(x,y) = \frac{1}{2\pi\varepsilon^2}\exp\!\left(-\frac{\|x-y\|^2}{2\varepsilon^2}\right),
\end{align*}
where $\varepsilon > 0$ is a fixed small parameter representing the resolution of the measurement at location $x$.

The gradients $\nabla\mathcal{F}$ and $\nabla\mathcal{F}_x$ can be computed explicitly using the linearity of the sensing operator and the spectral representation of the Gaussian kernel. We expand $H$ using the standard sinusoidal basis $\{e_k\}$:
\begin{align}\label{sine_basis}
    e_k(x_1,x_2) = 2\sin(k_1\pi x_1)\sin(k_2\pi x_2), \quad k=(k_1,k_2)\in\mathbb{N}^2,
\end{align}
with associated eigenvalues $\lambda_k = \pi^2\|k\|^2$ for the Laplacian. We denote the coefficients of the expansion as:
\begin{align*}
    c_k(\phi) = \langle \phi, e_k \rangle \quad \text{and} \quad \bar{c}_k = \langle \bar\phi, e_k \rangle, \quad \forall k \in \mathbb{N}^2.
\end{align*}
Consequently, the stochastic gradient is given by:
\begin{align*}
    \nabla\mathcal{F}_x(\phi) &= \mathcal{K}_\varepsilon(x,\cdot) \int_{[0,1]^2} \mathcal{K}_\varepsilon(x,y)(\phi(y)-\bar\phi(y))\,dy \\
    &= \sum_{k \in \mathbb{N}^2} \exp\left(-\frac{\varepsilon^2\lambda_k}{2}\right) (c_k(\phi)-\bar c_k) e_k(x) \sum_{l \in \mathbb{N}^2} \exp\left(-\frac{\varepsilon^2\lambda_l}{2}\right) e_l(x) e_l.
\end{align*}
Taking the expectation over $x \in [0,1]^2$ yields:
\[
    \nabla\mathcal{F}(\phi) = \sum_{k \in \mathbb{N}^2} (c_k(\phi)-\bar c_k) \exp(-\varepsilon^2\lambda_k) e_k.
\]
The noise term in the basis representation is then:
\[
    \langle \nabla\mathcal{F}(\phi) - \nabla\mathcal{F}_x(\phi), e_k \rangle = (c_k(\phi)-\bar c_k)e^{-\varepsilon^2\lambda_k} - \sum_{l \in \mathbb{N}^2} e^{-\frac{\varepsilon^2(\lambda_l+\lambda_k)}{2}} (c_l(\phi)-\bar c_l) e_l(x) e_k(x).
\]
From this, the entries of the state-dependent covariance operator $Q(\phi)$ can be computed as:
\begin{align*}
    &\mathbb{E} \left[ \langle \nabla(\mathcal{F}-\mathcal{F}_x)(\phi), e_k \rangle \langle \nabla(\mathcal{F}-\mathcal{F}_x)(\phi), e_{k'} \rangle \right] \\
    &= (c_k(\phi)-\bar c_k)(c_{k'}(\phi)-\bar c_{k'}) e^{-\varepsilon^2(\lambda_k+\lambda_{k'})} \\
    &\quad - \sum_{l,l'} e^{-\frac{\varepsilon^2(\lambda_l+\lambda_{l'}+\lambda_k+\lambda_{k'})}{2}} (c_l(\phi)-\bar c_l)(c_{l'}(\phi)-\bar c_{l'}) \mathbb{E} \left[ e_l(x)e_k(x)e_{l'}(x)e_{k'}(x) \right].
\end{align*}
This expression defines the covariance matrix used to formulate the diffusion term in the SME.

{In the numerical implementation, the expectation with respect to 
$x\sim\mathcal U([0,1]^2)$ is approximated using a uniform grid 
$G_x=\{x_m\}_{m=1}^{N_x\times N_x}$ on $[0,1]^2$. At each SGD step, the location $x$
is also sampled from the grid points in $G_x$. In the simulations, $N_x=30$ is used for \Cref{fig_target}, \Cref{fig_grad} and \Cref{fig_grad_pt}, and $N_x=10$ for \Cref{fig_inhomo}.

To construct the covariance operator $Q(\phi)$, we evaluate the basis 
functions $\{e_k(x_m)\}$ on the grid and compute empirical averages of the 
corresponding products over $G_x$. In particular, all expectations appearing 
in the covariance expression are obtained numerically through grid averaging, 
rather than by evaluating the integrals analytically.

}

For the numerical simulation, we choose a target function $\bar\phi$ given by:
\begin{align*}
    \bar\phi(x_1,x_2) = x_1(1-x_1)x_2(1-x_2) \sum_{n=1}^3 a_n \exp\left(-\beta_n \left((x_1-c_{n,1})^2 + (x_2-c_{n,2})^2\right)\right),
\end{align*}
with parameters:
\begin{gather*}
    (a_1,a_2,a_3)=(1,0.8,0.65), \quad (\beta_1,\beta_2,\beta_3)=(35,30,28), \\
    c_1=(0.2,0.8), \quad c_2=(0.6,0.4), \quad c_3=(0.3,0.2).
\end{gather*}
{The reference solution is visualized in \Cref{fig_target},  which exhibits three localized peaks with smooth decay towards the boundary.}
\begin{figure}[htb]
    \centering
    \begin{overpic}[width=0.45\linewidth]{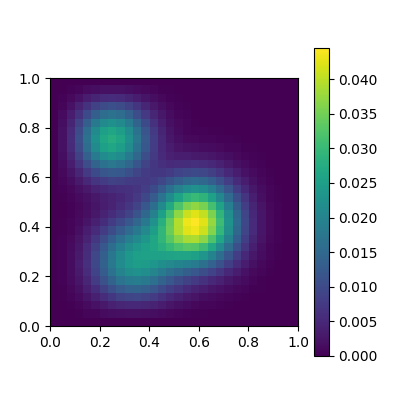}
        \put(35,82.5){$K=20$}
    \end{overpic}
    \caption{\textbf{(Example 2: Target function)} Heatmap of the target function $\bar\phi$ used in the inverse problem, visualized using a truncation level $K=20$.}
    \label{fig_target}
\end{figure}

We initialize at $\phi_0 = 0$ and use the test function $g(\phi) = \|\phi\|^4$ to evaluate the weak error. To demonstrate that the convergence is uniform with respect to discretization, we truncate $H$ using the first $K \times K$ modes, for $K \in \{2, 3, 4, 6, 8\}$. {To illustrate the effect of the truncation, \Cref{fig_grad} shows the gradient $\nabla\mathcal F(0)$ reconstructed using different values of $K$. As the truncation level increases, the representation becomes more localized and better resolves the underlying structure of the objective functional.}
\begin{figure}[htb]
    \centering
    \begin{overpic}[width=0.9\linewidth]{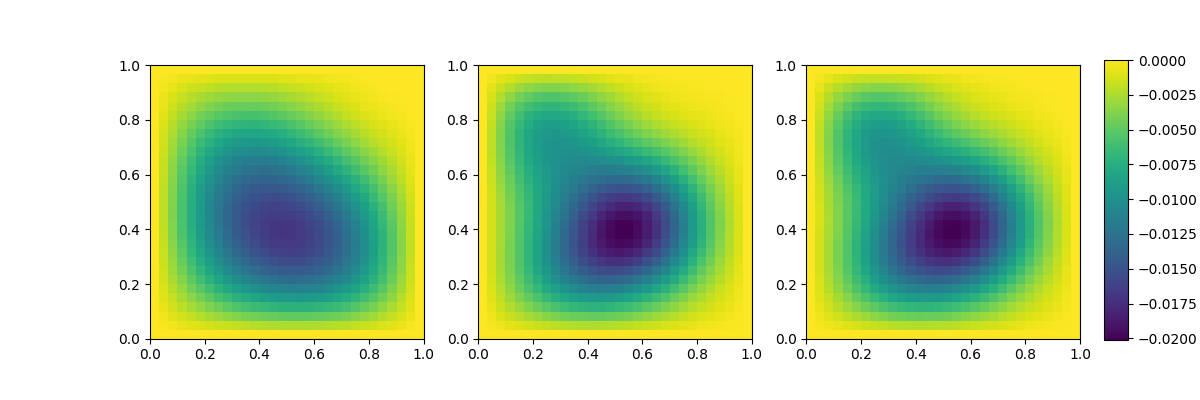}
        \put(21,29){$K=2$}
        \put(48,29){$K=4$}
        \put(75,29){$K=8$}
    \end{overpic}
    \caption{\textbf{(Example 2: Gradient of the objective functional)} Heatmap of $\nabla\mathcal F(\phi)$ evaluated at $\phi=0$, reconstructed using different truncation levels $K$.}
    \label{fig_grad}
\end{figure}
{In \Cref{fig_grad_pt}, we further plot an example of the stochastic gradient $\nabla\mathcal F_x(0)$. Compared with the full gradient $\nabla\mathcal F(0)$ (shown in \Cref{fig_grad}), the stochastic gradient exhibits a strong spatial concentration around the measurement location.}
\begin{figure}[htb]
    \centering
    \begin{overpic}[width=0.9\linewidth]{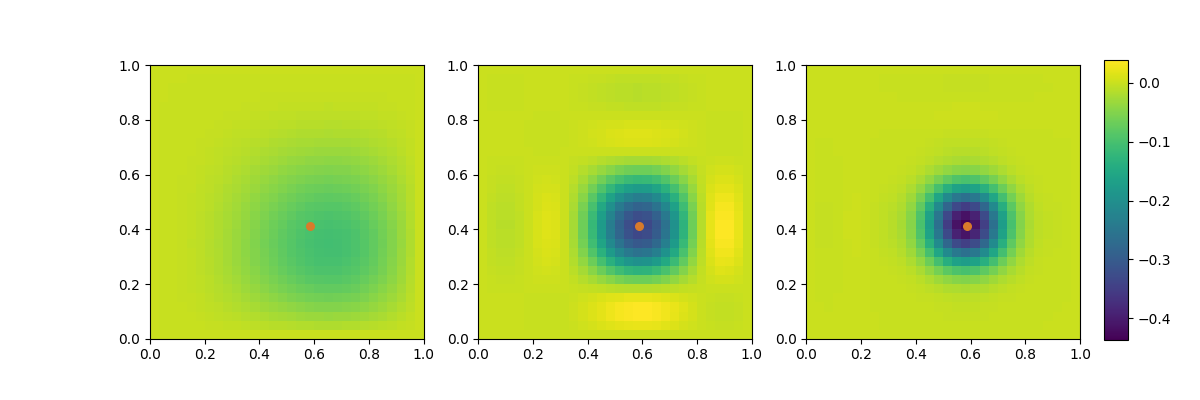}
        \put(21,29){$K=2$}
        \put(48,29){$K=4$}
        \put(75,29){$K=8$}
    \end{overpic}
    \caption{\textbf{(Example 2: Stochastic gradient)} Heatmap of the stochastic gradient $\nabla\mathcal F_x(\phi)$ evaluated at $\phi=0$ for a sampled location $x$ (orange dot). The plots show the reconstruction using different truncation levels $K$.}
    \label{fig_grad_pt}
\end{figure}

The resulting weak errors are presented in \Cref{fig_inhomo}. The error decreases with a clear slope of $\mathcal{O}(\eta^2)$ across all truncation dimensions $K$, which is in excellent agreement with the theoretical prediction of \Cref{thm_weak_conv}. Notably, as $K$ increases, the error curves remain consistent and do not show any degradation in the convergence rate. This suggests that the $\mathcal{O}(\eta^2)$ scaling is robust to the dimension of the subspace and is expected to persist in the infinite-dimensional limit.

\begin{figure}
    \centering
        \begin{overpic}[width=0.6\linewidth]{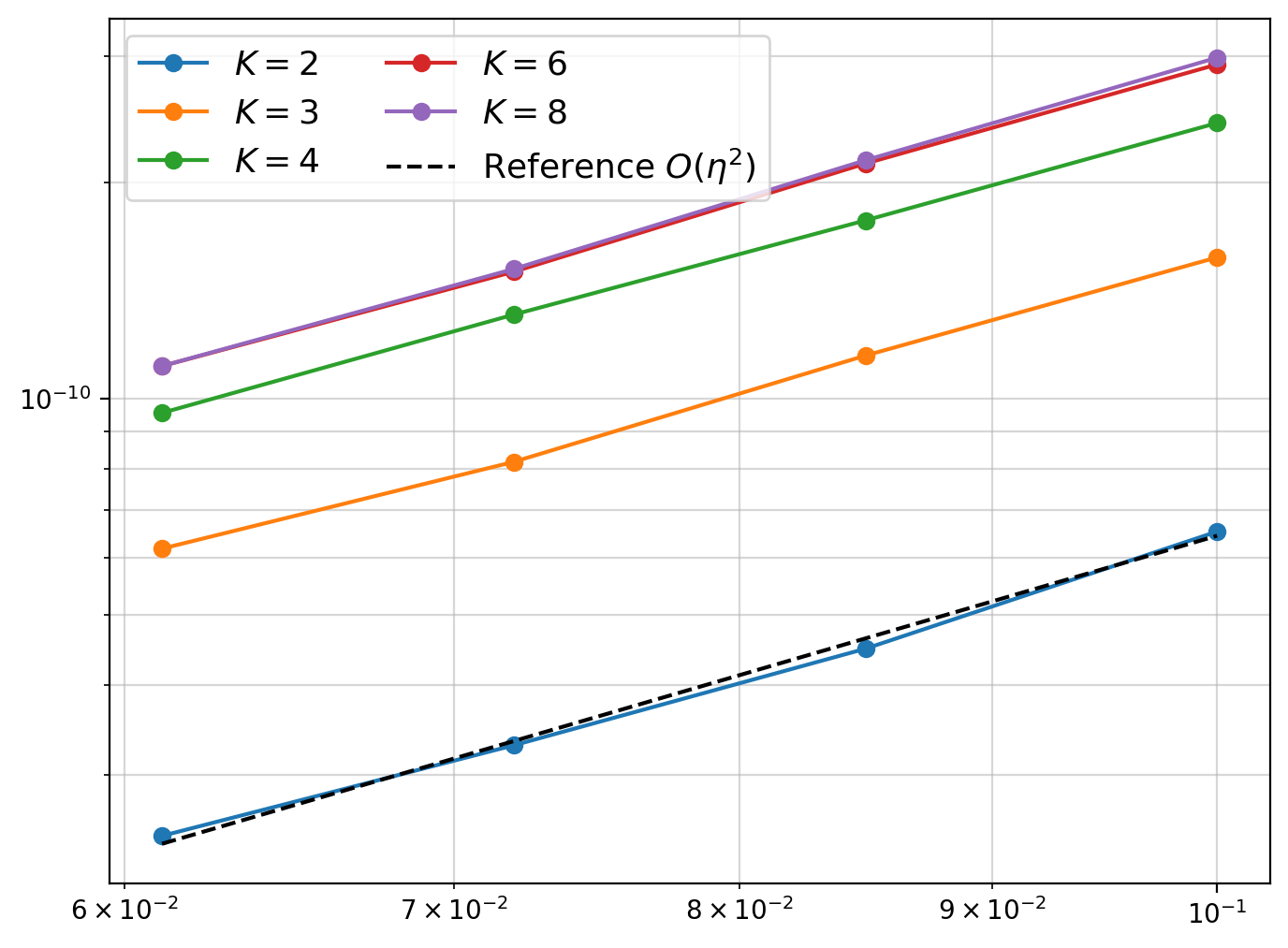}
            \put(44,-2){ stepsize $\eta$}
            \put(-7,12){\rotatebox{90}{$\Big|\frac{1}{N}\sum_{l=1}^N\left(g(\phi_{T/\eta}^{(l)})-g(\tilde\varphi_{T/\eta}^{(l)})\right)\Big|$}}
        \end{overpic}
    \caption{{\textbf{(Example 2: Weak error in the inhomogeneous noise setting)} The plot shows the difference between the SGD iterate and the discretized SME as a function of the step size $\eta$ for different truncation levels $K$. The dashed line indicates the reference slope $\mathcal{O}(\eta^2)$. All results are obtained using $10^6$ Monte Carlo trials {with smoothing parameter $\varepsilon=0.1$.}}}
    \label{fig_inhomo}
\end{figure}
{\subsection{Example 3: Restoration of the Cameraman Image}}
In this subsection, we keep the inverse-problem setting of \Cref{subsec_inhomo} and replace only the smooth synthetic target by the standard cameraman test image. The test image \(I^{\mathrm{cam}} \in [0,1]^{512\times 512}\) is a grayscale image that is normalized to take values in \([0,1]\). Here we use the standard grayscale convention: black corresponds to 0 and white to 1. It is a finite-dimensional object, and this subsection is devised completely for exploration purpose.

Since the groundtruth image is not an object in an Hilbert space, we are to process the data and map it to one. To do so, we first resample this array onto an \(N_x\times N_x\) computational grid by Lanczos interpolation, and interpret the resulting array \(I^{(N_x)} \in [0,1]^{N_x\times N_x}\) as grid values of the target and project it onto the first \(K\times K\) modes of the sine basis \eqref{sine_basis}. The produced function thus becomes
\[
u^\dagger_{N_x,D}(x,y)
=
\sum_{k_1=1}^{K}\sum_{k_2=1}^{K}
c_{k_1,k_2}\,e_{k_1,k_2}(x,y),
\]
where the coefficients \(c_{k_1,k_2}\) are obtained from the discrete \(L^2\)-projection of the resampled image \(I^{(N_x)}\) onto the span of the first \(K\times K\) sine modes, and \(D=K^2\) is the number of basis functions. This projected target is illustrated in \Cref{fig_cameraman_target}. Note that this final target is not identical to the raw resampled image due to the artificial effect of the basis functions.

\begin{figure}
    \centering
    \begin{overpic}[width=\linewidth]{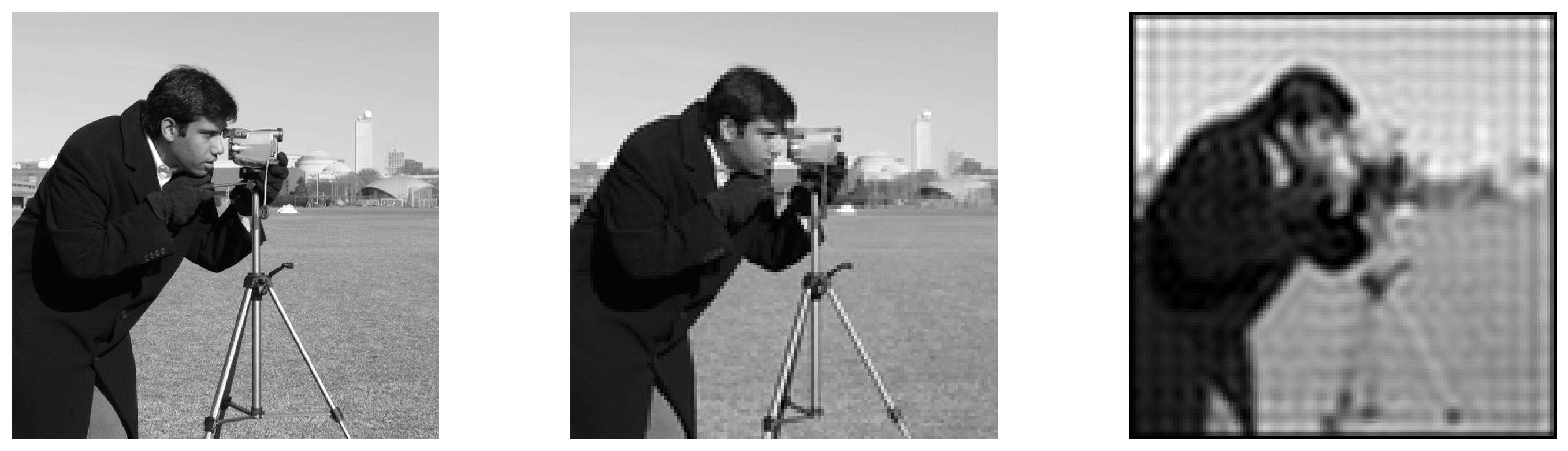}
        \put(7.5,29){Original Image}
        \put(41.5,29){Resampled Image}
        \put(77.5,29){Projected Target}
        \end{overpic}
    \caption{From left to right: the standard cameraman test image on a \(512\times512\) grid, its Lanczos-resampled version on a \(128\times128\) grid, and the corresponding sine-basis approximation with \(D=40^2\).}
    \label{fig_cameraman_target}
\end{figure}

We first report the quantitative weak-error result. We use the same numerical setting as in \Cref{fig_inhomo}, except that the smooth synthetic target is replaced by the cameraman target described above. 
To show the error plot, we use a lower-resolution target with \(N_x=10\) and \(D=8^2\), together with \(N=10^6\) Monte Carlo samples. The result is shown in \Cref{fig_cameraman_decayrate}. 
The same \(O(\eta^2)\) decay is clearly visible, across all values of $D=K^2$, indicating that the second-order weak matching between SGD and the discretized SME is still observed when the smooth synthetic target is replaced by a standard benchmark image with sharp edges.

\begin{figure}
    \centering
    \begin{overpic}[width=0.6\linewidth]{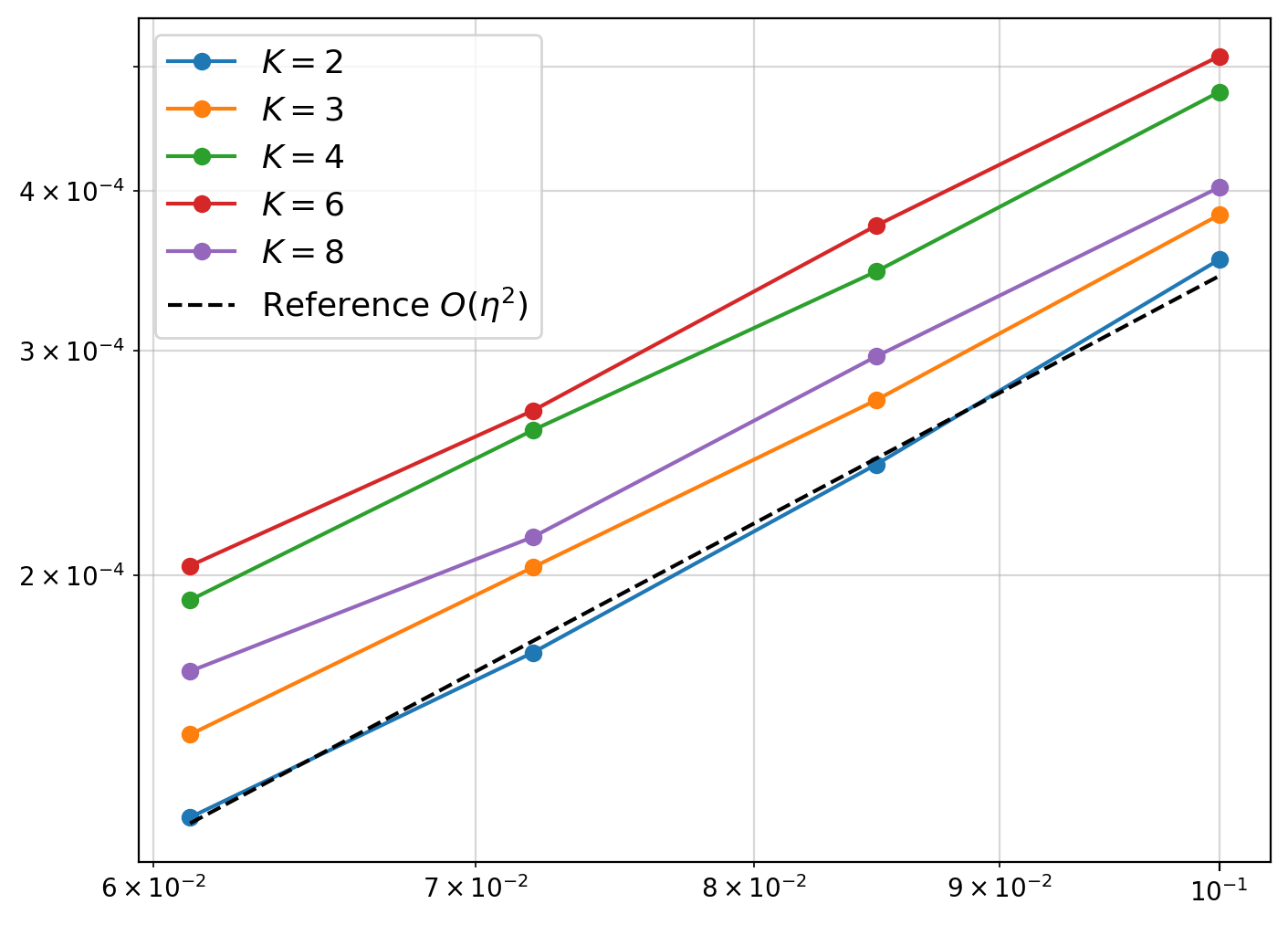}
        \put(44,-2){stepsize \(\eta\)}
        \put(-7,12){\rotatebox{90}{$\Big|\frac{1}{N}\sum_{l=1}^N\bigl(g(\varphi_{T/\eta}^{(l)})-g(\tilde\phi_{T/\eta}^{(l)})\bigr)\Big|$}}
    \end{overpic}
    \caption{Weak error between SGD and the discretized SME for the cameraman target. The numerical setting is the same as in \Cref{fig_inhomo}, except that the smooth synthetic target is replaced by the cameraman image. The dashed line indicates the reference slope \(O(\eta^2)\).}
    \label{fig_cameraman_decayrate}
\end{figure}

We also run the optimization over a higher-resolution target with \(N_x=128\), \(D=40^2\), \(\varepsilon=0.01\), and \(\eta=10^{-3}\), starting from the zero initial condition. The results are presented in~\Cref{fig_cameraman_sample_path}. In the upper row, we plot the reconstruction generated by SGD, and the lower row shows the one generated by the discretized SME, at times \(t=1,3,6,15\). 
Although this figure is not intended as a pathwise validation of the theory, it shows that both dynamics recover the same large-scale features of the image over time.

\begin{figure}[htb]
    \centering
    \begin{overpic}[width=0.8\linewidth]{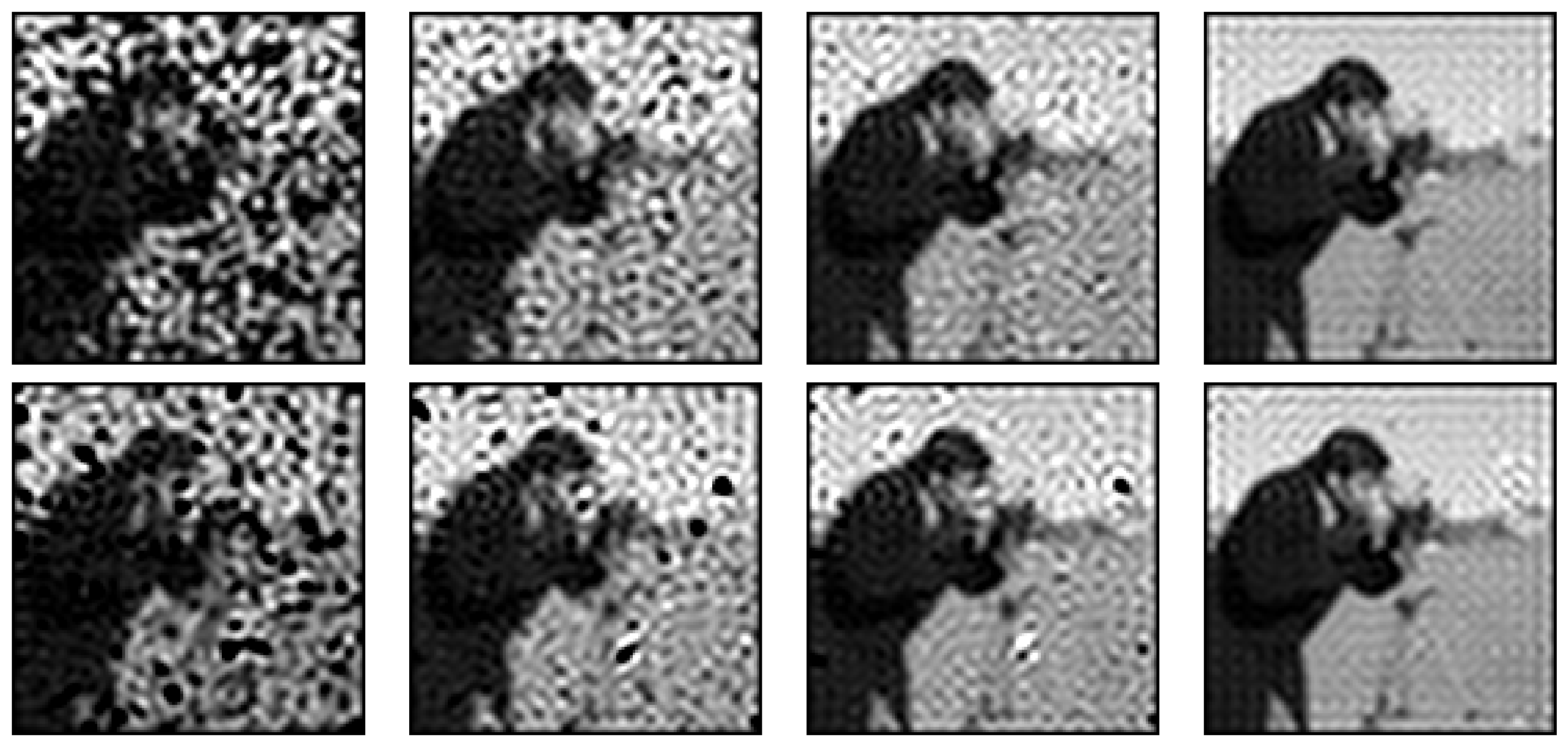}
        \put(-6,34){SGD}
        \put(-6,11){SME}
        \put(10,48){\(t=1\)}
        \put(35,48){\(t=3\)}
        \put(60,48){\(t=6\)}
        \put(85,48){\(t=15\)}
    \end{overpic}
    \caption{A single sample-path illustration of the reconstruction dynamics for the cameraman target. The upper row corresponds to SGD and the lower row to the discretized SME.}
    \label{fig_cameraman_sample_path}
\end{figure}

\section{Conclusion}
In this paper, we derived a stochastic modified equation (SME) for stochastic gradient descent (SGD) in infinite-dimensional Hilbert spaces. We successfully identified the appropriate space for the cylindrical Brownian motion and characterized the diffusion coefficients necessary to establish the system's well-posedness. A key finding of this work is the refined error analysis between the SGD iterates and the SME. While classical results typically suggest that the Euler-Maruyama discretization yields first-order weak accuracy, we demonstrate that the weak discrepancy between the SGD iterates and the continuous SME scales as ${O}(\eta^2)$. This result significantly extends existing weak approximation theories from finite-dimensional settings to the infinite-dimensional regime. Our theoretical framework was further validated through numerical experiments across both homogeneous and inhomogeneous noise regimes, confirming the predicted scaling laws.

Several avenues for future research remain open. One natural extension involves the study of the long-time behavior of the SME, specifically regarding its stability properties and the characterization of invariant measures as they relate to the asymptotic behavior of SGD. It would also be of interest to extend the present analysis to more general stochastic optimization schemes, such as momentum-based or adaptive methods, where the infinite-dimensional setting introduces unique analytical challenges. From a theoretical perspective, broadening the applicability of this framework may require relaxing the regularity assumptions on the drift and diffusion operators or allowing for weaker moment conditions on the stochastic gradient noise. Finally, a quantitative analysis of discretization and truncation effects remains essential to clarify the complex interaction between infinite-dimensional dynamics and their finite-dimensional numerical approximations.

\appendix
\section{Proof of \Cref{thm_wellposed}}\label{pf_thm_wellposed}
For every $n\in\mathbb N$, we define the truncation operator
\[
\pi_n(x)=
\begin{cases}
x,& \|x\|\le n,\\[8pt]
n\,\dfrac{x}{\|x\|},& \|x\|>n,
\end{cases}
\]
and we set
\[
b_n(x)=b(\pi_n(x)), \qquad
\sigma_n(x)=\sigma(\pi_n(x)).
\]
It is immediate to check that $b_n$ and $\sigma_n$ satisfy \eqref{wp1}, for some constant independent of $n$, namely
\begin{equation}
    \label{wp2}
    \langle b_n(x),x\rangle + \|\sigma_n(x)\|_{\mathcal L_2}^2
\le c(1+\|x\|^2).
\end{equation}
Moreover, $b_n$ and $\sigma_n$ are globally Lipschitz continuous, so that  the equation
\[
dX_t^n=b_n(X_t^n)\,dt+\sigma_n(X_t^n)\,dW_t,
\qquad X_0^n=x,
\]
admits a unique global adapted solution $X^n_t \in\,L^p(\Omega;C([0,T])$, for every $p\geq 1$ and $T>0$.

Now, we fix $p\ge1$ and apply It\^o's formula to $X_t^n$ and $\Phi(u)=\|u\|^{2p}$.
Since
\[
D\Phi(u)h=2p\|u\|^{2p-2}\langle u,h\rangle,
\]
and
\[
D^2\Phi(u)(h,k)
=2p(2p-2)\|u\|^{2p-4}\langle u,h\rangle\langle u,k\rangle
+2p\|u\|^{2p-2}\langle h,k\rangle.
\]
we get
\[
\begin{aligned}
\|X^n_t\|^{2p}
&= \|x\|^{2p}
+2p\int_0^t \|X^n_s\|^{2p-2}\langle X^n_s,b_n(X^n_s)\rangle\, ds
+p\int_0^t \|X^n_s\|^{2p-2}
\|\sigma_n(X^n_s)\|_{\mathcal L_2}^2 ds\\
&\quad
+2p(p-1)\int_0^t
\|X^n_s\|^{2p-4}
\|\sigma_n(X^n_s)^*X^n_s\|^2 ds
+M_t,
\end{aligned}
\]
where
\[
M_t
:=2p\int_0^t
\|X^n_s\|^{2p-2}
\langle X^n_s,\sigma_n(X^n_s)\,dW_s\rangle.
\]

By using
\[
\|\sigma^*(y)y\|\le \|\sigma(y)\|_{\mathcal L_2}\|y\|,
\]
we get
\[
\|X^n\|^{2p-4}\|\sigma^*(X^n)X^n_t\|^2
\le \|X^n\|^{2p-2}\|\sigma(X^n_t)\|_{\mathcal L_2}^2.
\]
Hence, thanks to \eqref{wp2}, we have
\[\|X^n_s\|^{2p-2}\langle X^n_s,b_n(X^n_s)\rangle+\|X^n_s\|^{2p-4}
\|\sigma_n(X^n_s)^*X^n_s\|^2\leq c_p\,\left(1+\|X^n_s\|^{2p}\right),\]
so that
\[
\|X^n_t\|^{2p}
\le \|x\|^{2p}
+c_p\int_0^t (1+\|X^n_s\|^{2p})ds
+M_t.
\]

Now, if we  take the supremum with respect to $t$ first, and   then the expectation, we obtain
\[
\mathbb E\, \sup_{t\le T}\|X^n_t\|^{2p}
\le \|x\|^{2p}
+c_p\int_0^T
\Big(1+\mathbb E\sup_{r\le s}\|X^n_r\|^{2p}\Big)ds
+\mathbb E\,\sup_{t\le T}|M_t|.
\]
Since
\[
\langle M\rangle_T
=4p^2\int_0^T
\|X^n_s\|^{4p-4}
\|\sigma_n(X^n_s)^*X^n_s\|^2 ds
\le c_p\int_0^T
\|X^n_s\|^{4p-2}
\|\sigma_n(X^n_s)\|_{\mathcal L_2}^2 ds,
\]
we obtain
\[
\mathbb E\sup_{t\le T}|M_t|
\le \frac 12\, \mathbb E\sup_{t\le T}\|X^n_t\|^{2p}
+c
\int_0^T
\left(1+\mathbb E\sup_{r\le s}\|X^n_r\|^{2p}\right)ds.
\]
This allows to conclude
\[
\mathbb E\sup_{t\le T}\|X^n_t\|^{2p}
\le c_p
\Big(
\|x\|^{2p}
+1
+\int_0^T
\mathbb E\sup_{r\le s}\|X^n_r\|^{2p} ds
\Big).
\]
and Gronwall's lemma implies
\begin{equation}\label{wp6}
\mathbb E\sup_{t\le T}\|X_t^n\|^{2p}
\le c_p(T)(1+\|x\|^{2p}),
\end{equation}
with $c_p(T)$ independent of $n$.

Next, if for every $n \in\,\mathbb{N}$ we define
\[
\tau_n=\inf\{t:\|X_t^n\|\ge n\},
\]
according to \eqref{wp6}, we have
\begin{equation}\label{wp3}
\mathbb{P}\left(\tau_\infty:=\lim_{n\to \infty}\tau_n=+\infty\right)=1.
\end{equation}
Actually, for every $T>0$ we have
\[\mathbb{P}\left(\tau_n\leq T\right)\leq \frac{1}{n^{2p}}\,\mathbb E\,\sup_{t\le T}\|X_t^n\|^{2p}\leq \frac{c_p(T)}{n^{2p}}(1+\|x\|^{2p}),\]
so that
\[\lim_{n\to\infty}\mathbb{P}\left(\tau_n\leq T\right),\]
and \eqref{wp3} follows.

In view of \eqref{wp3}, for every $(t,\omega) \in\,[0+\infty)\times \{\tau_\infty=+\infty\}$, there exists some $n=n(\omega,t) \in\,\mathbb{N}$ such that $t\leq \tau_n(\omega)$ and we  define
 \[X_t(\omega)=X^n_t(\omega).\]
 Since for every $n\leq m$ we have
 \[\|x\|\leq n\Longrightarrow b_n(x)=b_m(x)=b(x),\]
 this is a good definition and $X_t$ is a global solution of  equation \eqref{wp4}. Moreover, due to \eqref{wp6}, estimate \eqref{wp5} holds.

Finally, as far as the uniqueness is concerned,
if $X$ and $\widetilde X$ are two solutions with the same initial condition, we
define
\[
\rho_n:=\inf\{t\ge0:\ \|X_t\|\vee\|\widetilde X_t\|\ge n\}.
\]
On $[0,\rho_n]$, both processes remain in the ball of radius $n$, where $b$ and
$\sigma$ are Lipschitz; standard estimates yield $X_t=\widetilde X_t$ for all
$t\le \rho_n$ a.s. Letting $n\to\infty$ yields $X=\widetilde X$ a.s. on every
finite interval, hence globally.

\section{Explicit formula for $\bar{A_i}$}\label{form_Ai}
We recall the constants from \Cref{cor_gn3bound}:
\begin{align*}
    C_1=1+2\eta\|Db\|_\infty+\eta^2J_1^2,\quad C_i=\eta^2J_i^2,\quad i=2,3.
\end{align*}
One can estimate the denominators appearing in $A_2$ and $A_3$:
\begin{align*}
    C_1-C_1^{1/2}&=\frac{C_1-1}{1+C_1^{-1/2}}\ge\frac{C_1-1}{2}\ge\eta\|Db\|_\infty,\\
    C_4-C_1&\ge2\eta\|Db\|_\infty,\quad C_4-C_1^{1/2}=C_4-C_1\ge2\eta\|Db\|_\infty.
\end{align*}
These bounds gives the lower bound of the absolute value of the denominators. Using the above estimates, we obtain
\begin{align*}
    |A_2|&=\left(\|D^2g_0\|_\infty+\frac{\|Dg_0\|_\infty C_2^{1/2}}{C_1-C_1^{1/2}}\right)\frac{(C_1C_2)^{1/2}}{C_4-C_1},\\
    &\le\left(\|D^2g_0\|_\infty+\frac{\|Dg_0\|_\infty J_2}{\|Db\|_\infty}\right)\frac{J_2C_1^{1/2}(\eta)}{2\|Db\|_\infty}=:\bar{A_2}(\eta),\\
    |A_3|&=\frac{1}{C_4-C_1^{1/2}}\left(\frac{\|Dg_0\|_\infty C_2^{1/2}}{C_1-C_1^{1/2}}(C_1C_2)^{1/2}+\|Dg_0\|_\infty C_3^{1/2}\right)\\
    &\le\frac{1}{2\|Db\|_\infty}\left(\frac{\|Dg_0\|_\infty J_2^2 C_1^{1/2}(\eta)}{2\|Db\|_\infty}+\|Dg_0\|_\infty J_3\right)=:\bar{A_3}(\eta),
\end{align*}
and
\begin{align*}
    |A_1|&\le\|D^3g_0\|_\infty+\bar{A_2}(\eta)+\bar{A_3}(\eta)=:\bar{A_1}(\eta).
\end{align*}

\end{document}